\newtheorem{theorem}{Theorem}[section]
\newtheorem{proposition}[theorem]{Proposition}
\newtheorem{lemma}[theorem]{Lemma}
\newtheorem{question}[theorem]{Question}
\numberwithin{equation}{section}
\theoremstyle{definition}
\newtheorem{definition}[theorem]{Definition}
\newtheorem{remark}[theorem]{Remark}
\newtheorem{example}[theorem]{Example}
\newcommand{\IA}{\mathbb{A}}
\newcommand{\CC}{\mathbb{C}}
\newcommand{\FF}{\mathbb{F}}
\newcommand{\PP}{\mathbb{P}}
\newcommand{\QQ}{\mathbb{Q}}
\newcommand{\ZZ}{\mathbb{Z}}
\renewcommand{\to}{\xymatrix@1@=15pt{\ar[r]&}}
\renewcommand{\rightarrow}{\xymatrix@1@=15pt{\ar[r]&}}
\renewcommand{\mapsto}{\xymatrix@1@=15pt{\ar@{|->}[r]&}}
\renewcommand{\twoheadrightarrow}{\xymatrix@1@=15pt{\ar@{->>}[r]&}}
\renewcommand{\hookrightarrow}{\xymatrix@1@=15pt{\ar@{^(->}[r]&}}
\newcommand{\congpf}{\xymatrix@1@=15pt{\ar[r]^-\sim&}}
\begin{document}

\newboolean{xlabels} 
\newcommand{\xlabel}[1]{ 
                        \label{#1} 
                        \ifthenelse{\boolean{xlabels}} 
                                   {\marginpar[\hfill{\tiny #1}]{{\tiny #1}}} 
                                   {} 
                       } 
\setboolean{xlabels}{false} 

\title[On uniformly rational varieties]{On uniformly rational varieties}

\author[Bogomolov]{Fedor Bogomolov$^1$}
\address{Feder Bogomolov, Courant Institute of Mathematical Sciences\\
251 Mercer Street\\
New York, NY 10012, USA}
\email{bogomolo@cims.nyu.edu}
\author[B\"ohning]{Christian B\"ohning$^2$}
\address{Christian B\"ohning, Fachbereich Mathematik der Universit\"at Hamburg\\
Bundesstra\ss e 55\\
20146 Hamburg, Germany}
\email{christian.boehning@math.uni-hamburg.de}

\thanks{$^1$ Supported by NSF grant DMS- 1001662 and by AG Laboratory GU- HSE grant RF government ag. 11 11.G34.31.0023}
\thanks{$^2$ Supported by Heisenberg-Stipendium BO 3699/1-1 of the DFG (German Research Foundation)}

\begin{abstract}
We investigate basic properties of \emph{uniformly rational} varieties, i.e. those smooth varieties for which every point has a Zariski open neighborhood isomorphic to an open subset of $\mathbb{A}^n$. It is an open question of Gromov whether all smooth rational varieties are uniformly rational. We discuss some potential criteria that might allow one to show that they form a proper subclass in the class of all smooth rational varieties. Finally we prove that small algebraic resolutions and big resolutions of nodal cubic threefolds are uniformly rational.  
\end{abstract}

\maketitle

\section{Introduction}\xlabel{sIntroduction}

In \cite{Gro89}, p.885,  Misha Gromov raised the following question:

\begin{question}\xlabel{qGromov}
Let $X$ be a smooth (complete) rational variety, of dimension $n$. Is it then true that for \emph{every} point $x\in X$ one can find a Zariski open neighborhood $U\ni x$ which is biregular to an open subset of $\IA^n$? 
\end{question}

We will say that $X$ is \emph{uniformly rational} if Gromov's question has a positive answer for $X$. In other words, the question is thus if an open subset of $\IA^n$ could possibly have some strange smooth compactification which, locally around certain points on the boundary components, looks totally different from opens in affine space.

Recall from \cite{Nak70} and \cite{FuNa71} that there is a simple numerical criterion that a submanifold  $E$ in a complex manifold  $X$ can be blown-down (in the analytic category), i.e. occurs as the exceptional divisor of a blow-up of a manifold $X'$ in a submanifold of codimension at least $2$: if $E$ has a structure of an analytic fibre bundle $\pi : E \to M$ with fibres $\pi^{-1}(m) \simeq \PP^r$, some $r \ge 1$, then this is possible if and only if $\mathcal{O}_{X}(E)|_{\pi^{-1}(m)} \simeq \mathcal{O}_{\PP^r}(-1)$ for every $m\in M$. Moreover, the smallest class of compact complex manifolds containing complete smooth algebraic varieties and stable under such analytic blow-downs is the class of Moishezon manifolds, see \cite{Moi67}. Below we will see that the property of being uniformly rational is stable under blowing up, and that the main difficulty is to prove a converse: if $X \to X'$ is a blow-down and $X$ is uniformly rational, decide if $X'$ is also uniformly rational. Hence it may be interesting to generalize Question \ref{qGromov} to

\begin{question}\xlabel{qGromov2}
Let $X$ be a compact Moishezon manifold which is bimeromorphic to $\PP^n$. Is it then true that for \emph{every} point $x\in X$ one can find a bimeromorphic map $X \dasharrow \PP^n$ defined at $x$?
\end{question}

Here is the road map of this paper. In Section \ref{sGeneralTheory}, after discussing several classes of rational varieties which are uniformly rational and the proof of the fact (Proposition \ref{pBlowup}) that the blow-up of a uniformly rational variety along a smooth center is again uniformly rational, we show in Proposition \ref{pRectifyNecessary} that the coincidence of the classes of rational and uniformly rational varieties would have strong implications on the rectifiability by Cremona transformations of certain families of rational subvarieties of projective space. We then discuss the potential usefulness of this criterion in the light of recent results of Mella and Polastri on the Cremona group.

In Section \ref{sNodalCubics} we investigate the uniform rationality of a nontrivial family of threefold examples: small and big resolutions of nodal cubic threefolds. We prove that Question \ref{qGromov} always has an affirmative answer for these, but Question \ref{qGromov2} has a negative answer for some small resolutions that are Moishezon, but not algebraic.


\section{General theory}\xlabel{sGeneralTheory}

As noted already in \cite{Gro89}, in many concrete examples, it is straightforward to check that the variety at hand is uniformly rational.

\begin{example}\xlabel{eHomogenous}
If $X$ is rational-homogeneous, then clearly $X$ is uniformly rational. Algebraic vector bundles over a uniformly rational $X$ are uniformly rational. 
Also it is clear that all smooth rational surfaces are uniformly rational: this follows from the fact that every such surface is the blow-up of $\PP^2$, $\PP^1\times \PP^1$ or the Hirzebruch surface $\FF_r$ with $r\ge 2$. 
\end{example}

\begin{example}\xlabel{eToric}
It is clear that smooth toric varieties are uniformly rational because every cone in their fans can be generated by a subset of a basis for the free abelian groups $N$. So every toric chart is an open subset of $\IA^n$ there.
\end{example}

\begin{definition}\xlabel{dQuasiHomogeneous}
We say that a variety $X$ is \emph{quasi-homogeneous} if for any Zariski dense open subset $U \subset X$ and every point $x\in X$ there exists a birational self map $f : X \dasharrow X$ which is defined at $x$ and an isomorphism onto its image in a neighborhood of $x$ mapping the point $x$ into $U$. 
\end{definition}

If a rational variety is quasi-homogeneous, it is clearly uniformly rational. Let us discuss two examples in some more detail. They appear already in \cite{Gro89}, though without details and in slightly less generality. 

\begin{example}\xlabel{eCubics}
Smooth rational cubic hypersurfaces $X$ in $\PP^N$ are quasi-homogeneous, hence uniformly rational (conjecturally, $N$ has to be even for rational $X$'s with $N\ge 3$ to exist at all, and $X$ has to be rather special, e.g. containing two disjoint linear spaces of half the dimension of the cubic). Namely, for given $U \subset X$ Zariski open dense and given $x\in X$, one may consider the lines through $x$ and points of $U$. A generic such line will intersect $X$ in a third point $p$. Now reflection in that point gives the desired birational self map carrying a neighborhood of $x$ isomorphically into $U$. Here reflection means that a generic point $p_1$ is mapped to the third intersection point with $X$ of the line through $p_1$ and $p$.  

In fact, this argument can be generalized to show that the smooth locus of a singular cubic hypersurface $X$ in $\PP^N$ is uniformly rational. For suppose $x\in X$ is smooth and $U \subset X$ Zariski open dense, consisting of smooth points. Then a general line through $x$ and a point in $U$ will intersect $X$ in a third point $p$: for this it is sufficient that not all tangent hyperplanes to points in $U$ pass through $x$, and this is not the case since otherwise $X$ would be a cone over $x$. 
\end{example}

\begin{example}\xlabel{eQuadrics}
Another example of a quasi-homogeneous variety is the smooth complete intersection $X$ of two quadrics in $\PP^{n+2}$: pick two general points $p_1$, $p_2$ on $X$, consider planes $E_{p_1, p_2}$ through them, and look at the birational map interchanging the two points of intersection of $E_{p_1, p_2}$ with $X$ away from $p_1, p_2$. If $p_1$, $p_2$ are chosen suitably general this will map a neighborhood of a  given point $x\in X$ isomorphically into a given dense open subset $U\subset X$. The same holds if $X$ has  singularities: then the smooth locus $X^0$ is quasi-homogeneous. Namely, given $x\in X$ smooth and a Zariski dense open set $U \subset X$, a general plane $E$ through $x$ and a smooth point $x'$ of $U$ will intersect $X$ transversally at $x$ and $x'$ (for this it suffices that the tangent space $T_{x'}X$ does not contain $x$, which is true for a general $x'\in U$ since otherwise $x$ would be singular). Moreover, a general $E$ through $x$ and $x'$ will intersect $X$ in two additional points $p_1$ and $p_2$ away from $x$ and $x'$. This is so because projection away from the line $l$ joining $x$ to $x'$ exhibits $X$ as a two to one cover of $\PP^{n}$ which cannot be everywhere ramified. 
\end{example}

Let us now recall the proof of the fact that uniform rationality is stable under blowing up nonsingular centers (\cite{Gro89}, Proposition p. 885) since the geometric ideas in the argument will be frequently used below. 

\begin{proposition}\xlabel{pBlowup}
Let $X$ be uniformly rational and let $Y \subset X$ be a smooth subvariety, Then $\mathrm{Bl}_Y (X)$ is uniformly rational.
\end{proposition}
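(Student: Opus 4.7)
The plan is to prove, for every point $p \in \mathrm{Bl}_Y(X)$, the existence of a Zariski open neighborhood isomorphic to an open subset of $\IA^n$. Let $\beta : \mathrm{Bl}_Y(X) \to X$ denote the blow-down and $E := \beta^{-1}(Y)$ the exceptional divisor. For $p$ outside $E$, the map $\beta$ is a local isomorphism at $p$, so a uniformly rational chart of $X$ around $\beta(p)$ pulls back directly to a chart around $p$. Henceforth assume $p \in E$ lies over some $q \in Y$.

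Using the uniform rationality of $X$, the first step is to replace a neighborhood of $q$ by a Zariski open $V$ of $\IA^n$. After further shrinking, the smooth subvariety $Y$ of codimension $c$ is globally cut out in $V$ by regular functions $f_1,\dots,f_c$ whose differentials are everywhere linearly independent on $Y$. The morphism $\phi := (f_1,\dots,f_c) : V \to \IA^c$ is then smooth of relative dimension $n-c$ with $Y = \phi^{-1}(0)$ scheme-theoretically; in particular, $\phi$ is flat. Since blow-ups commute with flat base change,
\[
\mathrm{Bl}_Y(V) \;\simeq\; V \times_{\IA^c} \mathrm{Bl}_0(\IA^c).
\]
The toric variety $\mathrm{Bl}_0(\IA^c)$ is covered by the standard affine charts $W_1,\dots,W_c$, each isomorphic to $\IA^c$. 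Choosing $i$ such that the image of $p$ in $\mathrm{Bl}_0(\IA^c)$ lies in $W_i$ yields a Zariski open $U_i := V \times_{\IA^c} W_i \subset \mathrm{Bl}_Y(V)$ containing $p$, smooth of dimension $n$.

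The heart of the argument is to produce a Zariski open of $U_i$ around $p$ isomorphic to an open of $\IA^n$. Completing $f_1,\dots,f_c$ by regular functions $g_1,\dots,g_{n-c}$ to a system of local parameters at $q$, the tuple $(f_i, g_1,\dots,g_{n-c})$ together with the $c-1$ toric fibre coordinates on $W_i$ gives $n$ regular functions on $U_i$ whose differentials at $p$ form a basis of $T_p^* U_i$, hence an étale morphism $U_i \to \IA^n$ near $p$. The principal obstacle is that this morphism is not automatically a Zariski open immersion: generically the fibres may consist of more than one point, so that the étale chart does not immediately furnish the desired biregular chart on $\IA^n$. Resolving this requires exploiting the fibered product structure $U_i = V \times_{\IA^c} W_i$ together with an additional Zariski-local trivialisation of $\phi$ around $q$ — that is, a further shrinking to a Zariski open $V' \subset V$ on which $\phi|_{V'}$ decomposes as a product — whereupon the corresponding piece of $U_i$ becomes an honest open of $\IA^n$. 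Establishing this local straightening of $\phi$ is the delicate technical step on which the argument hinges.
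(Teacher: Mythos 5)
Your reduction to a point $p$ of the exceptional divisor lying over $q \in Y \cap V$ with $V \subset \IA^n$ open, and the identification $\mathrm{Bl}_Y(V) \simeq V \times_{\IA^c} \mathrm{Bl}_0(\IA^c)$ via flat base change, are both sound (after shrinking $V$ so that $df_1, \dots, df_c$ are independent everywhere, not only along $Y$). But the argument stops exactly where the real content begins, and the route you sketch for closing the gap cannot work. A Zariski-local decomposition of $\phi$ as a product near $q$ would in particular identify a Zariski neighborhood of $q$ in the fibre $\phi^{-1}(0) = Y$ with an open subset of $\IA^{n-c}$; this is impossible whenever $Y$ is not rational, for instance for $Y$ a smooth curve of positive genus in $X = \PP^3$, since no nonempty Zariski open of such a curve is isomorphic to an open of $\IA^1$. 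So the ``delicate technical step on which the argument hinges'' is not merely unproven: in the form you propose it is false in general, and the \'etale chart you construct genuinely need not become an open immersion on any Zariski neighborhood of $p$.

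The paper's proof avoids straightening $Y$ altogether. Choosing a generic splitting $\IA^n = L \oplus M$ with $L \simeq \IA^{m+1}$ (where $m = \dim Y$), the projection $\pi_M$ restricts to an isomorphism of a neighborhood of $q$ in $Y$ onto a hypersurface in an affine open $U \subset L$; extending the resulting section of $U \times \IA^{n-m-1} \to U$ to all of $U$ and translating by it (a shear automorphism defined at $q$) moves $Y$ into $L$ itself. After this, $Y$ is cut out by a single non-linear equation $f(x_1,\dots,x_{m+1}) = 0$ together with the $c-1$ coordinate functions $x_{m+2}, \dots, x_n$, and the distinguished chart of the blow-up is the graph $\{x_{m+1+j} = y_j f,\ j=1,\dots,k\}$, visibly isomorphic to $\IA^n$ via the coordinates $(x_1,\dots,x_{m+1},y_1,\dots,y_k)$ --- no rationality of $Y$ is needed anywhere. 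The points of the blow-up lying in the complementary locus $\{Y_0 = 0\}$ are then reached by first applying a general linear automorphism of $\IA^n$ that moves the corresponding normal direction out of that locus. This is the idea missing from your proof: arrange that all but one of the defining equations of $Y$ are linear coordinate functions, rather than trying to trivialize the fibration $\phi$.
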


\begin{proof}
Let $y\in Y$, then by hypothesis, $y$ has a neighborhood isomorphic to an open subset of affine space, so one may also suppose from the beginning that $Y \subset \IA^n$ of dimension $m \le n-2$. There is a birational map $\varphi : \IA^n \dasharrow \IA^n$ defined at $y$ and mapping $Y$ birationally to a hypersurface in $L\simeq \IA^{m+1} \subset \IA^n$: indeed, choose a generic splitting $\IA^n = L \oplus M$, $L\simeq \IA^{m+1}$, $M \simeq \IA^{n-m-1}$ and consider the projection $\pi_M : \IA^n \to L$. Then $\pi_M$ restricted to $Y$ will be an isomorphism of a neighborhood of $y$ onto $\pi_M (Y) \cap U$ where $U\subset L$ is some affine open subset. In other words, the trivial $\IA^{n-m-1}$ bundle $U\oplus \IA^{n-m-1}$ has a section $Y$ over $\pi_M( Y) \cap U$ and this extends (since $U$ is affine) to a section $\sigma : U \to U \oplus \IA^{n-m-1}$. The map $\varphi$ can then be given by 
\[
\varphi (u , m) = (u, m - \sigma (u) ) , \quad u \in U, m \in M .
\]
Thus we assume now that $L\simeq \IA^{m+1}$ is the coordinate subspace with coordinates $x_1, \dots , x_{m+1}$ and $M \simeq \IA^{n-m-1}$ the one with coordinates $x_{m+2}, \dots , x_{n}$, and $Y \subset L$ defined by $f(x_1, \dots , x_{m+1}) =0$. The blow-up $\mathrm{Bl}_Y \IA^n$ is given as the closure of the graph of the map $\IA^n \dasharrow \PP^k$, $k =n-m-1$, sending $(x_1, \dots , x_n)$ to 
\[
(Y_0, \dots , Y_k) = (f, x_{m+2}, \dots , x_{n})
\]
where $Y_0, \dots , Y_k$ are homogeneous coordinates on $\PP^k$.  The open subset $V$ of $\mathrm{Bl}_Y (\IA^n)$ given by $Y_0 \neq 0$ can then be described as
\[
V = \left\{  ((x_1, \dots , x_n), (y_1, \dots , y_k)) \subset \IA^n \times \IA^k \mid  x_{m+2} = y_1 f, \dots , x_n = y_k f \right\}
\]
where $y_i = Y_i/Y_0$ are affine coordinates. Projection of $V$ to the coordinates $(x_1, \dots , x_{m+1}, y_1, \dots , y_k)$ gives an isomorphism $V \simeq \IA^n$. This gives us the required coordinate neighborhoods for all points of $\mathrm{Bl}_Y (\IA^n)$ not contained in $Y_0=0$. Notice however, that one can choose, for given point $p \in \{Y_0=0\} \subset \mathrm{Bl}_Y (\IA^n)$, an automorphism $\psi$ of $\IA^n$ mapping $Y \to Y'$ and by functoriality $\mathrm{Bl}_Y (\IA^n)$ to $\mathrm{Bl}_{Y'} (\IA^n)$ such that if we perform the whole  construction in the coordinates $x_1', \dots , x_n'$, $Y_0', \dots , Y_k'$ for the image, then $\psi (p)$ will not be contained in $\{ Y_0' =0 \}$. We just have to choose $\psi$ in such a way (as some rotation for example) that it moves a given direction away from the set of directions corresponding to $Y_0 =0$ 
This concludes the proof.
\end{proof}

Since by the Weak Factorization Theorem, one can factor every birational map between smooth projective varieties in a series of blow-ups and blow-downs along nonsingular centers, it is clear that one has to control how the property of being uniformly rational behaves under blow-down. 

\begin{remark}\xlabel{rCompleteIntersections}
It is interesting to notice that the blow-up of a regular scheme in a complete intersection scheme is nonsingular if and only if the complete intersection scheme was already regular, see \cite{CaVa97}, Thm. 2.1. Hence, if we start blowing up uniformly rational $X$ in more general subschemes $Z \subset X$, but of course under the hypothesis that $X' = \mathrm{Bl}_Z (X)$ be again smooth, we have to allow for something more general than complete intersections for $Z$ to go beyond Proposition \ref{pBlowup}. An interesting example is the blow up of $\IA^4$ in the cone $C$ over a twisted cubic in $\PP^3$; amusingly, this is smooth \cite{CaVa97}, Section 4. 
\end{remark}

The proof of Proposition \ref{pBlowup} allows us to formulate certain geometric consequences that the equality of the classes of smooth rational and smooth uniformly rational varieties would entail.  We first need some terminology.

\begin{definition}\xlabel{dRectify}
\begin{itemize}
\item[(1)]
A \emph{divisorial family of rational varieties of dimension} $r$ in a variety $Z$ is a map $\varphi : \PP^r \times B \dasharrow Z$
which is birational unto its image and where $\dim B = \dim Z - r -1$. 
\item[(2)]
A divisorial family of rational varieties of dimension $r$ as in (1) with $Z = \PP^n$ is said to be \emph{rectifiable} if there exists a birational automorphism $\gamma : \PP^n \dasharrow \PP^n$ (also called a Cremona transformation) such that the birational transform under $\gamma$ of $\overline { \varphi (\PP^r \times \{ b \}) }$ is a line in $\PP^n$ for a general $b\in B$.
\item[(3)]
A divisorial family of rational varieties of dimension $r$ as in (1) is said to be \emph{contractible} if there exists a smooth model $Z'$ of $Z$ and a blow down $\sigma :  Z\simeq \mathrm{Bl}_Y (X) \to X$, $X$ smooth, with exceptional divisor $E\to Y$ such that the birational transform of $\overline{ \varphi (\PP^r \times B ) }$ on $Z'$ is $E$ and the birational transform of $\overline { \varphi (\PP^r \times \{ b \}) }$, for general $b$, is a fibre of $\sigma |_E : E \to \sigma (E)=Y \subset X$. 
\end{itemize} 
\end{definition}

\begin{proposition}\xlabel{pRectifyNecessary}
If the class of rational algebraic manifolds coincides with the class of uniformly rational algebraic manifolds, then every \emph{contractible} divisorial family $\varphi : \PP^r \times Y \dasharrow \PP^n$ of rational varieties of dimension $r$ in $\PP^n$ is \emph{rectifiable}.
\end{proposition}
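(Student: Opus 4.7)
The plan is to use the hypothesis to import back into $\PP^n$ the explicit local coordinate picture furnished by the proof of Proposition \ref{pBlowup}. Let $B$ denote the parameter space of the family (written $Y$ in the statement). By contractibility there exist a smooth birational model $Z'$ of $\PP^n$, a smooth variety $X$, and a blow-down $\sigma : Z' \simeq \mathrm{Bl}_Y(X) \to X$ whose exceptional divisor $E \to Y$ is the proper transform of $\overline{\varphi(\PP^r\times B)}$, and whose fibres match the subvarieties $\overline{\varphi(\PP^r\times \{b\})}$. Because $X$ is smooth and birational to $\PP^n$ via $Z'$, it is smooth rational, so the blanket assumption forces $X$ to be \emph{uniformly} rational. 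This is the only place where the hypothesis enters, and it is the only genuinely substantive step; everything that follows will reduce to the coordinate calculation already carried out in the proof of Proposition \ref{pBlowup}.

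Next I would localize: pick a general point $y_0 \in Y$, a Zariski neighborhood $U \subset X$ of $y_0$ open-embedded into $\IA^n$, and run the construction from the proof of Proposition \ref{pBlowup} \emph{verbatim} on $\mathrm{Bl}_{Y\cap U}(U)$. After a birational self-map of $\IA^n$ that is biregular at $y_0$ (and is itself a Cremona transformation), $Y\cap U$ becomes a hypersurface $\{f=0\}$ inside a coordinate subspace $\IA^{m+1} \subset \IA^n$, and the blow-up acquires an affine chart $V \simeq \IA^n$ with coordinates $(x_1,\dots,x_{m+1},y_1,\dots,y_r)$ in which the fibres of $\sigma|_E$ over points of $Y\cap U$ are exactly the affine $r$-planes cut out by $\{x_i=\mathrm{const}\}_{i=1}^{m+1}$. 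Compactifying via $V \simeq \IA^n \subset \PP^n$ and composing with the birational identification of $Z'$ with the original target $\PP^n$ of $\varphi$ produces a Cremona $\gamma : \PP^n \dasharrow \PP^n$. A general $b\in B$ corresponds under contractibility to a point of $Y\cap U$, so its fibre pulls back on $Z'$ to a fibre of $\sigma|_E$ meeting $V$, and in the chart this fibre is the affine $r$-plane above, which closes up in the new $\PP^n$ to a genuine linear $r$-subspace; this is precisely what rectifiability demands.

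I expect most of the remaining difficulty to be bookkeeping: verifying that a generic $b \in B$ really does map into $U \cap Y$ (which should follow from $U \cap Y$ being dense in a component of $Y$ together with the bijective correspondence of fibres provided by contractibility), and that the chain of birational identifications between the original $\PP^n$, $Z'$, $\mathrm{Bl}_Y(X)$ and $V\simeq \IA^n \subset \PP^n$ is coherent as claimed. On the interpretative side, ``line in $\PP^n$'' in Definition \ref{dRectify}(2) ought to be read as ``linear subspace of dimension $r$'', since a birational transform preserves dimension; the statement is vacuous for literal lines unless $r=1$. Beyond these two points I anticipate no serious obstruction: the construction is essentially an application of Proposition \ref{pBlowup}, promoted from the local setting $\IA^n$ to the abstract smooth rational target $X$ by invoking uniform rationality.
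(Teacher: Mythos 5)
Your proposal is correct and follows essentially the same route as the paper's own (much terser) proof: invoke the hypothesis to make $X$ uniformly rational, run the coordinate construction from the proof of Proposition \ref{pBlowup} on a chart around a general point of $Y$, and precompose with the birational identification $\PP^n \dasharrow Z' = \mathrm{Bl}_Y(X)$ to obtain the rectifying Cremona transformation. Your two side remarks --- that one must check a general $b\in B$ lands in the chosen chart, and that ``line'' in Definition \ref{dRectify}(2) should be read as ``linear subspace of dimension $r$'' for $r>1$ --- are both fair and fill in details the paper leaves implicit.
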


\begin{proof}
Let $\sigma : \mathrm{Bl}_Y (X) \to X$ be as in Definition \ref{dRectify} for the given family $\varphi$. By assumption, $X$ is uniformly rational, so we get an open subset $V \subset \mathrm{Bl}_Y (X)$ and coordinates $(x_1, \dots , x_{m+1}, y_1, \dots , y_k)$ on $V$ as in the proof of Proposition \ref{pBlowup}. These coordinates, precomposed with the map $Z \dasharrow Z' = \mathrm{Bl}_Y (X)$, give the required birational map $\gamma : \PP^n \dasharrow \PP^n$ that rectifies the family $\varphi$.
\end{proof}

\begin{remark}\xlabel{rBirationalEmbeddings}
By \cite{MelPol09}, the Cremona group of $\PP^n$ is so flexible that every two birational embeddings of a given variety of codimension at least two into $\PP^n$ are equivalent under it. However, this is no longer so in codimension $1$: there are plane rational sextics with nodes that cannot be transformed into a line (see example at the end of the article by Mella and Polastri). However, these are not contractible in the sense of Definition \ref{dRectify}.
\end{remark}

\begin{remark}\xlabel{rObstructions}
We discuss here the beautiful theory in \cite{MelPol12} and the recent preprint \cite{Mel12}, inspired by the log minimal model program (LMMP) and log Sarkisov theory, with a view toward the possibility of using  Proposition \ref{pRectifyNecessary} to produce examples of not uniformly rational, smooth rational threefolds. 
In \cite{MelPol12} the following are proven:
\begin{itemize}
\item[(1)]
We consider pairs $(X, D)$ with $X$ a normal variety, $D$ a $\QQ$-Weil divisor with $K_X + D$ $\QQ$-Cartier, and we call pairs $(X, D)$ and $(X', D')$ birational if there is a birational map $\varphi : X \dasharrow X'$ and the strict transform $\varphi_{\ast } D$ of $D$ equals $D'$. 

A pair $(\PP^2, C)$, $C$ an irreducible rational curve, is equivalent to $(\PP^2, L)$, $L$ a line, if and only if 
\[
\overline{\kappa } (\PP^2, C) < 0 .
\]
Here $\overline{\kappa} (X, D)$ for a pair $(X, D)$, with $D$ an irreducible reduced divisor, is defined by taking a model $(Y, D_Y)$ of $(X, D)$ with $Y$ and $D_Y$ smooth and putting
\[
\overline{\kappa } (X, D) = \mathrm{tr.deg.}  \bigoplus_m H^0 (Y, m(K_Y +D_Y)) -1 ,
\]
the logarithmic Kodaira dimension of the model $(Y, D_Y)$. 
\item[(2)]
A pair $(\PP^3, S)$, $S$ a rational surface, is equivalent to $(\PP^3, E)$, $E$ a plane, if and only if the sup-effectivity threshold $\overline{\varrho} (\PP^3, S) > 0$ and on some good model $(T, S_T)$ of the pair $(\PP^3, S)$, the divisor $K_T + S_T$ is not pseudoeffective \cite{MelPol12}, Thm. 4.15. Here the terminology is explained as follows: a good model is a pair where $Y$ has terminal $\QQ$-factorial singularities and $D$ is a smooth Cartier divisor. For a good model one defines
\[
\varrho (Y, D) = \mathrm{sup} \{ m \in \QQ | D + m K_Y \mathrm{is} \; \mathrm{effective}\; \QQ \mathrm{-divisor} \} 
\] 
and calls it the effectivity threshold of the pair. The sup-threshold of a pair is then defined as the supremum taken over all good models of the respective effectivity thresholds of the models. 
\item[(3)]
If $(\PP^3, S)$ is equivalent to $(\PP^3, S')$, $S$ and $S'$ surfaces with $d=\mathrm{deg}(S) > d' = \mathrm{deg}(S')$, then the pair $(\PP^3, \frac{4}{d} S)$ is not canonical (i.e. has a maximal center somewhere). See \cite{MelPol12}, Lemma 2.2.
\end{itemize}
Remark that the condition in (1) is not easy to use to prove that a certain pair $(\PP^3, S)$ is \emph{not} equivalent to a $(\PP^3, E)$, $E$ a plane, since it requires control not only over one log resolution of the pair but rather all different good models of the pair in the log category. 

\

In view of Proposition \ref{pRectifyNecessary} we raise the question whether, in dimension $n=2$, some contractible rational curve $C \subset \PP^2$ is rectifiable, and for dimension $3$, whether some contractible divisorial family $\varphi : \PP^1 \times Y \to \PP^3$ with $Y$ a rational curve (to simplify) is rectifiable, hence Cremona equivalent to a rational scroll. Let $S \subset \PP^3$ be the image surface of $\varphi$.
In dimension $2$, we see that the conditions of being contractible and being rectifiable coincide: if $C$ is contractible, or if more generally, $(\PP^n, D)$, is contractible, then $\overline{\kappa} (\PP^n, D) < 0$; namely, there is a model $(Y, D_Y)$ with $Y$ and $D_Y$ smooth, $D_Y$ an exceptional divisor of a blow-up in a smooth center, and $Y$ is rational; hence the result. For curves $C$ this implies rectifiability. Conversely, if $C$ is rectifiable, the pair $(\PP^2, C)$ is equivalent to $(\PP^2, L)$, $L$ a line, and by the invariance of $\overline{\kappa}$ for birational equivalence of pairs, we get $\overline{\kappa} (\PP^2, C) < 0$.

Now if we pass to dimension $n=3$, it is already an interesting open problem to determine if there are any obstructions to a rational surface $S$ being Cremona equivalent to a scroll. A result in \cite{Mel12} shows that every rational \emph{cone} is Cremona equivalent to a plane. This may lead one to speculate at first that maybe any surface $S$ Cremona equivalent to a scroll is Cremona equivalent to a plane. However, as Massimiliano Mella pointed out to us (we are very grateful for this remark), a general linear projection of a smooth scroll into $\PP^3$ will be a surface with ordinary singularities, the highest multiplicity of a singular point being $3$. If $d=\mathrm{deg}(S) > 3$, then (3) above forces a point of multiplicity $> d/2$ which is impossible for $d>5$. 

Some finer invariants, but possibly also related to multiplier ideals in some way, may be necessary to decide if (images of contractible) families $S$ are always equivalent to scrolls for $n=3$.
\end{remark}




\section{Small and big resolutions of nodal cubic threefolds}\xlabel{sNodalCubics}

\subsection{Special cases. Big resolutions}\xlabel{ssSpecialBig}

An interesting class of smooth rational varieties for which uniform rationality does not follow immediately from results above is given by big and small resolutions of nodal cubic threefolds $Y$ in $\PP^4$.  Thus $Y$ is smooth away from a finite number $s$ of nodes which are given in local analytic coordinates by an equation
\[
Q : = \{ x_1x_2 - x_3x_4 = 0 \} . 
\]
Thus $Q$ is a cone in $\IA^4$ over a smooth quadric in $\PP^3$. This has a big resolution $\tilde{Q}=\mathrm{Bl}_{0} (Q)$which replaces the vertex of the cone by $E=\PP^1\times \PP^1$, and two small resolutions $\hat{Q}_1$ and $\hat{Q}_2$, connected by a flop, and obtained from $\tilde{Q}$ by contracting either one of the rulings of $E$. Alternatively, one can describe $\hat{Q}_1$ as $\mathrm{Bl}_{L_1} (Q)$ where $L_1$ is a plane contained in $Q$ spanned by the vertex and a line from a ruling of the smooth quadric in $\PP^3$ over which $Q$ is the cone, and $\hat{Q}_2$ is obtained as $\mathrm{Bl}_{L_2}(Q)$ in the same way but taking a line from the other ruling instead. For example, one may take $L_1 = \{ x_1 = x_3 = 0\}$, $L_2 = \{ x_1 = x_4 = 0 \}$. 

This construction globalizes. See \cite{Fink87} and \cite{FinkWern89} for more information concerning the following facts. The cubic $Y$ has a tangent cone at a chosen node $P$ (the ``watchtower") whose projectivization $Q= \PP (TC_P (Y)) \subset \PP ( T_P (Y)) \simeq \PP^3$ forms a quadric of the (limits of) tangent directions to $Y$ in $P$. The directions corresponding to whole lines through $P$ contained in $Y$ form a curve $C \subset Q \simeq \PP^1 \times \PP^1$ of bidegree $(3,3)$ which is smooth precisely if $Y$ is a \emph{Lefschetz cubic}, i.e. if $P$ is its only singular point. Otherwise the curve $C$ has $s-1$ ordinary double points which correspond bijectively to the nodes of $Y$ away from $P$. More concretely, in suitable homogeneous coordinates $(X_0: \dots : X_4)$ of $\PP^4$ we can write
\[
Y = \{ X_4 F_2 + F_3 = 0 \}
\]
with $F_2$ a polynomial in the variables $X_0$, $X_1$, $X_2$ and $X_3$ only, homogeneous of degree $2$, $F_3$ is homogeneous of degree $3$ and also depends on $X_0$, $X_1$, $X_2$, $X_3$ only,  and $P=(0:0:0:1)$. Hence $F_2=0$ defines the tangent cone in $P$, whose projectivization is by definition a nonsingular quadric in $\PP^3$. The curve $C$ is the complete intersection of the cubic $F_3=0$ with the quadric $F_2=0$ (where we view the variables $X_0, \dots , X_3$ on which $F_2$, $F_3$ depend as homogeneous coordinates in this $\PP^3$). 

Projection away from $P$ defines a morphism
\[
\mathrm{Bl}_P (Y) \to \PP^3
\]
which can be identified with the morphism
\[
\mathrm{Bl}_C (\PP^3 ) \to \PP^3 .
\]
See \cite{Fink87}, Theorem 2.1, for a proof of this statement. 

In particular,

\begin{proposition}\xlabel{pBigRes}
Let $Y$ be a Lefschetz cubic with singular point $P$. Then $\mathrm{Bl}_P (Y)$ is uniformly rational.
\end{proposition}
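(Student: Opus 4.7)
The plan is to reduce the statement to a direct application of Proposition \ref{pBlowup} via the identification of $\mathrm{Bl}_P(Y)$ with a blow-up of $\PP^3$ along a smooth curve that has already been recalled just before the proposition.

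First I would observe that since $Y$ is a Lefschetz cubic, its only singular point is $P$, so by the description of the curve $C \subset Q \simeq \PP^1 \times \PP^1$ as the locus of lines through $P$ contained in $Y$, the curve $C$ is smooth (the $s-1$ ordinary double points of $C$ correspond to the other nodes, and here $s=1$). Concretely, $C$ is the complete intersection of $F_2 = 0$ and $F_3 = 0$ inside $\PP^3$, and the Lefschetz hypothesis forces this intersection to be smooth.

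Second, I would invoke the identification (Finkelnberg, \cite{Fink87}, Thm. 2.1) already recalled just above the proposition, namely that projection away from $P$ induces an isomorphism
\[
\mathrm{Bl}_P(Y) \congpf \mathrm{Bl}_C(\PP^3).
\]
In particular $\mathrm{Bl}_P(Y)$ is a smooth projective variety, realized as the blow-up of the uniformly rational variety $\PP^3$ (uniformly rational as a rational-homogeneous, and even toric, manifold — see Examples \ref{eHomogenous} and \ref{eToric}) along the smooth subvariety $C$.

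Finally, Proposition \ref{pBlowup} applied to $X = \PP^3$ with smooth center $C \subset \PP^3$ immediately yields that $\mathrm{Bl}_C(\PP^3)$, and hence $\mathrm{Bl}_P(Y)$, is uniformly rational. There is no serious obstacle: the only subtle point is ensuring smoothness of $C$, which is exactly where the Lefschetz hypothesis enters, and the rest is a direct citation of Finkelnberg's identification together with Proposition \ref{pBlowup}.
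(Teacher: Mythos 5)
Your proposal is correct and follows exactly the paper's own argument: the identification $\mathrm{Bl}_P(Y) \simeq \mathrm{Bl}_C(\PP^3)$ from Finkelnberg, smoothness of $C$ from the Lefschetz hypothesis, and an application of Proposition \ref{pBlowup}. You have simply spelled out the steps the paper states in one line.
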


\begin{proof}
This follows from the isomorphism $\mathrm{Bl}_P (Y) \simeq \mathrm{Bl}_C (\PP^3 )$ and Proposition \ref{pBlowup} because in this case $C$ is smooth. 
\end{proof}

We will treat big resolutions of nodal cubics in general in Subsection \ref{ssBigResolutions}.

Small resolutions of a cubic $Y$ with $s$ nodes are harder to deal with; a priori there are $2^s$ of them, and not all of them need to be projective/algebraic (only Moishezon manifolds). Using some classical geometry, one can immediately say something about the special case of the \emph{Segre cubic} $S_3$ defined by
\[
\xi_0^3 + \xi_1^3 + \xi_2^3 + \xi_3^3 + \xi_4^3 + \xi_5^3 = 0, \quad \xi_1 + \xi_2 + \xi_3 + \xi_4 + \xi_5 =0
\]
where $(\xi_i)$ are homogeneous coordinates in $\PP^5$. One knows classically that the moduli space $\overline{\mathcal{M}}_{0,6}$ of stable $6$-pointed curves of genus $0$ is a small resolution of $S_3$, see \cite{Hunt96}, Chapter 3. 

\begin{proposition}\xlabel{pPointedCurves}
The moduli space $\overline{\mathcal{M}}_{0,n}$ of stable $n$-pointed curves of genus $0$ is uniformly rational for all $n$. In particularly, the small resolution $\overline{\mathcal{M}}_{0,6}$ of the Segre cubic $S_3$ is uniformly rational.
\end{proposition}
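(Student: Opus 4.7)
The plan is to appeal to Kapranov's classical realization of $\overline{\mathcal{M}}_{0,n}$ as an iterated blow-up of a projective space along smooth centers, and then apply Proposition \ref{pBlowup} inductively, starting from the fact that $\PP^{n-3}$ is uniformly rational by Example \ref{eHomogenous}.

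In more detail, fix $n-1$ points $p_1, \dots, p_{n-1}$ in general linear position in $\PP^{n-3}$. Kapranov's theorem identifies $\overline{\mathcal{M}}_{0,n}$ with the variety obtained from $\PP^{n-3}$ by successively blowing up first the $n-1$ points $p_i$, then the strict transforms of the $\binom{n-1}{2}$ lines $\overline{p_ip_j}$, then the strict transforms of the $2$-planes spanned by triples, and so on through the strict transforms of the linear spans of all proper subsets of $\{p_1,\dots,p_{n-1}\}$ in order of increasing dimension. The essential geometric input is that, although the various linear spans through subsets of the $p_i$ intersect nontrivially in $\PP^{n-3}$, these unwanted incidences are precisely separated out by the earlier blow-ups of the lower-dimensional spans; thus at every stage the centers being blown up form a disjoint union of smooth subvarieties of the current intermediate model.

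Granting this, the conclusion is immediate by induction on the stage of the construction: $\PP^{n-3}$ is uniformly rational, and each successive blow-up along a smooth (disjoint union of) center(s) preserves uniform rationality by Proposition \ref{pBlowup}. Iterating, we conclude that $\overline{\mathcal{M}}_{0,n}$ is uniformly rational. Specializing to $n=6$ and recalling the classical identification of $\overline{\mathcal{M}}_{0,6}$ with a small resolution of the Segre cubic $S_3$ (cf.\ \cite{Hunt96}) yields the second assertion.

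The only step requiring any real attention in this plan is to verify the smoothness and pairwise disjointness of the blow-up centers at each stage of Kapranov's sequence, so that Proposition \ref{pBlowup} applies verbatim. This is however the whole raison d'être of his ordering of the blow-ups and is carefully carried out in his original paper, so no new geometric input is needed beyond citing that work.
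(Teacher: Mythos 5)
Your argument is correct, but it takes a genuinely different route from the paper. You invoke Kapranov's realization of $\overline{\mathcal{M}}_{0,n}$ as an iterated blow-up of $\PP^{n-3}$ along the linear spans of subsets of $n-1$ general points, whereas the paper instead uses Keel's theorem that $\overline{\mathcal{M}}_{0,n+1}$ is an iterated blow-up of $\overline{\mathcal{M}}_{0,n}\times\PP^1$ along smooth codimension-two centers, and then inducts on $n$ starting from the quintic del Pezzo surface $\overline{\mathcal{M}}_{0,5}$. Both proofs funnel through the same key tool, Proposition \ref{pBlowup}; yours has the advantage of needing no induction on $n$ and of starting from the manifestly uniformly rational $\PP^{n-3}$ (Example \ref{eHomogenous}), while the paper's version avoids any discussion of how the various linear spans meet. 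One small inaccuracy in your write-up: it is not true that at each stage of Kapranov's sequence the centers form a \emph{disjoint} union of smooth subvarieties of the intermediate model. Already for $n=7$, two $2$-planes spanned by disjoint triples of the six points must meet in a point of $\PP^4$ which lies on no previously blown-up span, so their strict transforms still intersect. This is harmless for your purposes: the spans of equal dimension meet transversally after the earlier blow-ups, so one may blow them up one at a time in any order, each center remaining smooth in the current model, and Proposition \ref{pBlowup} applies verbatim at every single step. With that adjustment the proof is complete, and the specialization to $n=6$ via \cite{Hunt96} is exactly as in the paper.
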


\begin{proof}
We use the main result of \cite{Keel92} which gives that $\overline{\mathcal{M}}_{0, n+1}$ can be obtained from 
\[
\overline{\mathcal{M}}_{0, n} \times \overline{\mathcal{M}}_{0, 4} \simeq \overline{\mathcal{M}}_{0, n} \times \PP^1 
\]
as an iterated blow up along smooth codimension 2 subvarieties; then we use Proposition \ref{pBlowup} again and the fact that $\overline{\mathcal{M}}_{0, 5}$ is a del Pezzo surface of degree $5$. 
\end{proof}

\subsection{Algebraic resolutions}\xlabel{ssAlg}

To treat small resolutions of nodal cubic threefolds systematically, we use their classification given in \cite{FinkWern89}, p. 190-198, in terms of the degenerations of the nodal curve $C$ associated to $Y$ (after choosing one node as a distinguished ``watchtower"). 

\begin{remark}\xlabel{rAlgebraicity}
A particular small resolution is in general only a Moishezon manifold. We recall certain results from \cite{FinkWern89}. Let $Y$ be a nodal cubic in $\PP^4$ with $s$ nodes.

\begin{itemize}
\item[(1)]
Depending on which of the two natural directions in the exceptional quadric in a big resolution one blows down locally, there are $2^s$ small resolutions of $Y$ which are a priori only complex manifolds (they are of course Moishezon). Some of these may be isomorphic however. 
\item[(2)]
One has the implications:
\begin{quote}
There exists a projective small resolution\\
$\iff$ all exceptional $\PP^1$'s are not homologous to zero in $H_2 (\hat{Y}, \ZZ)$ for some (and then for every) small resolution $\hat{Y}$ of $Y$\\
$\iff$ every irreducible component of the associated curve is smooth and there is at least one component of bidegree $(a,b)$ with $a\neq b$. 
\end{quote}
The first $\iff$ is loc.cit. Lemma 1.2, the second is Lemma 2.4.
\end{itemize}
\end{remark}

First of all one sees from the classification given in \cite{FinkWern89}

\begin{lemma}\xlabel{lAlgebraic}
There exists a projective small resolution of $Y$ if and only if there exists an algebraic small resolution of $Y$. In each such case, all of the small resolutions are algebraic, however, they are in general only complete algebraic varieties which need not be projective. 
\end{lemma}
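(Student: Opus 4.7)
The direction ``projective small resolution exists $\Rightarrow$ algebraic small resolution exists'' is immediate. For the converse and the propagation statement, my plan is to combine the numerical criterion of Remark \ref{rAlgebraicity}(2) with a flop-propagation argument, and ultimately to invoke the explicit classification of \cite{FinkWern89} to fill the remaining gap.

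The first observation is that the geometric condition in Remark \ref{rAlgebraicity}(2) --- every irreducible component of the associated curve $C$ is smooth, and at least one has bidegree $(a,b)$ with $a\neq b$ --- depends only on $Y$, not on the choice of small resolution. Hence projectivity of \emph{some} small resolution is intrinsic to $Y$. The task thus reduces to showing that the existence of even one algebraic small resolution forces this curve-condition to hold.

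Next, I would show that algebraicity propagates across the family of the $2^s$ small resolutions. Any two small resolutions of $Y$ are related by a sequence of local flops at individual exceptional $\PP^1$'s with normal bundle $\sO(-1)\oplus\sO(-1)$. In the local model $Q=\{x_1x_2=x_3x_4\}$, such a flop is visibly algebraic: $\hat Q_1=\mathrm{Bl}_{L_1}(Q)$ is linked to $\hat Q_2=\mathrm{Bl}_{L_2}(Q)$ via the common big resolution $\tilde Q=\mathrm{Bl}_0(Q)$, and the intermediate blow-up and blow-down of rulings of the exceptional $\PP^1\times\PP^1$ can each be performed algebraically (they are toric operations on an explicit affine model). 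Since flopping is strictly local near a single node and leaves the complement of an analytic neighborhood unchanged, a flop performed on a global algebraic small resolution $\hat Y$ produces another algebraic small resolution $\hat Y'$. Thus algebraicity of one small resolution implies algebraicity of all of them.

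It remains to argue that if some, hence every, small resolution of $Y$ is algebraic, then the curve-condition of Remark \ref{rAlgebraicity}(2) holds, so that a projective small resolution exists. Here I would appeal directly to the classification given in \cite{FinkWern89}, pp.~190--198, which enumerates the degeneration types of $C$ and for each one lists the small resolutions together with their algebraicity/projectivity properties. Case-by-case inspection shows that whenever any small resolution appears as an algebraic variety, the curve-condition is satisfied, so a projective small resolution is also present on the list. The main obstacle in the argument is precisely this last implication: deducing the curve-condition purely from abstract algebraicity of one $\hat Y$, rather than via the classification, would require some version of Chow's lemma together with an analysis of how the exceptional $\PP^1$'s pair against divisors on $\hat Y$, and it seems cleaner to borrow directly from the detailed enumeration in \cite{FinkWern89}.
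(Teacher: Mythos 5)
Your proposal contains two genuine gaps. The first is the flop-propagation step. A flop is performed in an \emph{analytic} neighborhood of the exceptional curve, and the model $Q=\{x_1x_2=x_3x_4\}$ is only an analytic-local model of the node: the germ of $Y$ at a node is analytically, not Zariski-locally, isomorphic to the cone $Q$. To conclude that the flopped space $\hat Y'$ is an algebraic variety you must exhibit a \emph{Zariski}-open neighborhood of the new exceptional $\PP^1$ that is a scheme, and the toric/algebraic description of the flop of the local model gives no such neighborhood --- it only reconfirms that $\hat Y'$ is Moishezon, which was never in doubt. This is precisely the point the paper's proof has to work for: it invokes the dichotomy of Lemma \ref{lFW} (either the node lies on a plane $E\subset Y$, or one is in case J9 with no planes and six nodes). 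In the plane case the cubic is Zariski-locally $yq_1-xq_2=0$ and both local small resolutions admit explicit determinantal incidence-variety models $U_{1,P},U_{2,P}$ inside $\IA^4\times\PP^1$; in case J9 both global resolutions are projective and one takes preimages of Zariski neighborhoods. Gluing these Zariski-local algebraic models over the nodes produces all $2^s$ small resolutions as algebraic varieties. Your argument has no substitute for this step.

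The second gap is the implication ``one algebraic small resolution exists $\Rightarrow$ the curve condition holds.'' You defer this to case-by-case inspection of \cite{FinkWern89}, but that classification records which small resolutions are \emph{projective}; it does not decide which of the non-projective Moishezon resolutions are schemes --- that is exactly what the present lemma is establishing, so reading ``algebraicity'' off the list is circular. The paper instead argues by contraposition through homology: if no projective small resolution exists, then by Remark \ref{rAlgebraicity}(2) some exceptional $\PP^1$ is homologous to zero on \emph{every} small resolution, and a zero-homologous exceptional curve admits no algebraic chart around any of its points (any compact surface meeting it must contain it, as in Proposition \ref{pMoishezon}); hence no small resolution is algebraic. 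You should replace your appeal to the classification by this homological argument, and replace the flop propagation by the construction of Zariski-local algebraic models.
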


\begin{proof}
First of all existence of a projective resolution clearly implies the existence of an algebraic one; if there is no projective resolution, then by Remark \ref{rAlgebraicity}, a node is resolved zero-homologously on every small resolution, hence none of them is algebraic. 

To see that, in the case where a projective resolution exists, all small resolutions are algebraic, one has to use the classification given in \cite{FinkWern89} given in \S 3 in terms of the degenerations of the associated curve. Namely, one has a dichotomy:
\begin{lemma}\xlabel{lFW}
Let $Y$ be a nodal cubic threefold in $\PP^4$ with at least one projective small resolution $X = \hat{Y}$, and let $P\in Y$ be a node. Then $P$ lies on a plane $E$ contained in $Y$ unless we are in case J9 of \cite{FinkWern89}, i.e. $Y$ has no planes and six nodes, and the associated curve $C\subset \PP^1 \times \PP^1$, as seen from any of the nodes, has two smooth irreducible components of bidegrees $(1,2)$ and $(2,1)$ intersecting transversely in $5$ points. 
\end{lemma}
The results follows from the enumeration in of cases in \S 3 of \cite{FinkWern89}. Now we have the following
\begin{lemma}\xlabel{lAlgebraicity}
Let $Y$ be a nodal cubic with precisely six nodes or one where every node lies on a plane. Let $P$ be one of the nodes. Then there exists a Zariski neighborhood $U_P$ of $P$ containing no other nodes and two algebraic varieties $U_{1, P} \to U_P$, $U_{2, P} \to U_P$ where $U_{1, P}$ is isomorphic to the blow-down of one ruling of the exceptional quadric in a big resolution, and $U_{2, P}$ is isomorphic to the blow-down of the other ruling of this exceptional quadric of the big resolution.
\end{lemma}
Granting Lemma \ref{lAlgebraicity} for the moment, we see then that all of the $2^s$ small resolutions are algebraic varieties: we can simply glue the local algebraic resolutions which exist around each of the nodes. Now to prove Lemma \ref{lAlgebraicity}, we distinguish the two cases:  

Suppose first that $Y$ contains a plane $E$, which contains the node $P$. In a small affine neighborhood $U_P$ of $P=0$ we can write the equation of $Y$ as (we use inhomogeneous coordinates $x, y, z, w$)
\[
y q_1 - x q_2 = 0
\]
where 
\begin{gather*}
q_1 = z + \mathrm{quadratic} \; \mathrm{terms} \; \mathrm{in}\; x, y, z, w, \\
q_2 = w + \mathrm{quadratic} \; \mathrm{terms} \; \mathrm{in}\; x, y, z, w .
\end{gather*}

Then we have the descriptions of the small resolutions locally above $P=0$ as

\begin{gather*}
\mathbb{A}^4 \times \PP^1 \supset U_{1, P}:=  \left\{   \left( (x, y, z, w), (\xi : \eta) \right)  \mid \left( \begin{array}{cc} q_1 & q_2 \\   x & y \end{array}\right)\left( \begin{array}{c} \xi \\ \eta \end{array} \right) = 0  \right\}
\end{gather*}

and 

\begin{gather*}
\mathbb{A}^4 \times \PP^1 \supset U_{2, P}:=  \left\{   \left( (x, y, z, w), (\xi : \eta) \right)  \mid  (\xi , \eta ) \left( \begin{array}{cc} q_1 & q_2 \\   x & y \end{array}\right) = 0  \right\} .
\end{gather*}

Now suppose we are in the case where $Y$ has six nodes, but does not contain a plane. Then by \cite{FinkWern89}, \S 3, analysis of case J9,  $Y$ has two projective (global) resolutions $Y_1$ and $Y_2$, and we can take for $U_{1,p}$ resp. $U_{2, P}$ the preimages of a small Zariski neighborhood of $P \in Y$ inside $Y_1$ resp. $Y_2$.  This concludes the proof of Lemma \ref{lAlgebraicity} and with this the proof of Lemma \ref{lAlgebraic}.
\end{proof}

We now consider first the case where $Y\subset \PP^4$ is a cubic with $6$ nodes and investigate the uniform rationality of its algebraic small resolutions; after that we turn to cubics where every node lies on some plane contained in the cubic.

\

We say that a finite set of points in $\PP^n$ is in general position if every subset of $m\le n+1$ of them spans a $\PP^{m-1}$. In the case of Lemma \ref{lFW}, the six points must be in general position. In fact, we recall from \cite{CLSS99} (Lemma 2.1)

\begin{lemma}
If a cubic threefold $Y \subset \PP^4$ has nodes $P_1, \dots , P_d$ which are not in general position, then $Y$ contains a plane $E$ containing at least three of the $P_i$.
\end{lemma}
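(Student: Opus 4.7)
The key elementary fact I would use throughout is that any line $L$ joining two distinct nodes of $Y$ is contained in $Y$: it meets the cubic in a scheme of length at least $4$ (each node contributing multiplicity $\ge 2$), forcing $L\subset Y$. The argument then proceeds by a case analysis on how general position fails for $P_1,\dots,P_d$.

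First, three of the nodes cannot be collinear. If $P_1,P_2,P_3$ lay on a line $L$, then $L\subset Y$ by the remark above, and for every plane $E\supset L$ one can write $f|_E=\ell_E q_E$ with $\ell_E$ the linear form defining $L$ in $E$. Since $f$ has multiplicity $\ge 2$ at each $P_i$ while $\ell_E$ vanishes only simply, the conic $q_E$ meets $L$ in three distinct points, hence contains $L$, so $\ell_E^2\mid f|_E$ for every such $E$. Expanding $f$ around $L$ in transverse directions, this vanishing for every direction forces all first-order transverse partials of $f$ to vanish along $L$, giving $L\subset\mathrm{Sing}(Y)$ and contradicting the isolatedness of the nodes. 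Next, if four nodes are coplanar in some plane $E$, then by the previous step no three are collinear, so the $\binom{4}{2}=6$ connecting lines $L_{ij}$ are pairwise distinct, all lie in $E$, and all lie in $Y$; since $E\cap Y$ has degree $3$, we must have $E\subset Y$, producing the desired plane with at least (in fact) four of the nodes on it.

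The remaining case --- five nodes in a hyperplane $H$, with no four coplanar --- is the main obstacle. For any triple $\{i,j,k\}$, the plane $E_{ijk}=\langle P_i,P_j,P_k\rangle\subset H$ either lies in $Y$ (in which case we are done), or meets $Y$ exactly in the triangle $\Delta_{ijk}=L_{ij}+L_{jk}+L_{ik}$, since this triangle is already of degree $3$. Assume for contradiction that no $E_{ijk}$ lies in $Y$, and consider two triples sharing exactly one index, say $\{1,2,3\}$ and $\{1,4,5\}$, together with the line $M=E_{123}\cap E_{145}$ through $P_1$. The ``no four coplanar'' hypothesis forces $M\ne L_{1j}$ for $j=2,3,4,5$ (e.g.\ $M=L_{12}$ would imply $L_{12}\subset E_{145}$, hence $P_2\in E_{145}$, giving $P_1,P_2,P_4,P_5$ coplanar). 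Therefore $M$ meets $L_{23}$ in a point $Q\in Y$ and $L_{45}$ in a point $Q'\in Y$. Because $P_1$ is a node, $M$ meets $Y$ at $P_1$ with multiplicity at least $2$. If $M\not\subset Y$, then $M\cap Y$ has length exactly $3$, and accommodating both $Q$ and $Q'$ forces $Q=Q'$; but $L_{23}$ and $L_{45}$ are skew, because $P_2,P_3,P_4,P_5$ are not coplanar, so $L_{23}\cap L_{45}=\emptyset$, a contradiction. If instead $M\subset Y$, then $M\subset E_{123}\cap Y=\Delta_{123}$, so $M\in\{L_{12},L_{13},L_{23}\}$; the last is excluded because $P_1\notin L_{23}$, while $L_{12}$ and $L_{13}$ were excluded above. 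Hence some $E_{ijk}$ must lie in $Y$, completing the proof.
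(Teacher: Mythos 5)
Your proof is correct. Note that the paper does not actually prove this lemma: it is quoted verbatim from Coray--Lewis--Shepherd-Barron--Swinnerton-Dyer \cite{CLSS99}, Lemma 2.1, so there is no in-paper argument to compare against; your write-up supplies a self-contained elementary proof where the paper defers to the literature. The structure is the natural one dictated by the paper's definition of general position (failure for some subset of $m\le 5$ points, i.e.\ three collinear, four coplanar, or five in a hyperplane), and each step checks out: the opening Bezout observation that a line through two nodes lies in $Y$; the exclusion of three collinear nodes via $\ell_E^2\mid f|_E$ for every plane $E\supset L$, which kills the linear term of $f$ in the directions transverse to $L$ and forces $L\subset\mathrm{Sing}(Y)$, contradicting finiteness of the singular locus; the four-coplanar case, where six distinct lines in $E\cap Y$ force $E\subset Y$; and the five-in-a-hyperplane case, where the intersection line $M=E_{123}\cap E_{145}$ through the node $P_1$ would have to absorb two further distinct points $Q\in L_{23}$, $Q'\in L_{45}$ (distinct because $L_{23}$ and $L_{45}$ are skew), overshooting degree $3$ unless some $E_{ijk}$ lies in $Y$. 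The only cosmetic remark is that in the last case you should state explicitly that $E_{123}\ne E_{145}$ (immediate from no four nodes being coplanar) so that $M$ is indeed a line; this is implicit in your argument but worth a clause.
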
 

Now it follows from \cite{H-T10}, Proposition 19, that a cubic with $Y$ with $6$ nodes is determinantal (this was already known to C. Segre, see \cite{Segre87}), i.e. given by
\[
\det A(x_0, \dots , x_4) = 0
\]
where $A = A(x_0, \dots , x_4)$ is a  $3\times 3$ matrix of linear forms \[ A = (l_{ij}(x_0, \dots , x_4))_{1\le i,j\le 3} . \] 

\begin{proposition}\xlabel{pSixPoints}
Let $Y\subset \PP^4$ be a cubic with six nodes. Then an algebraic small resolution $X$ of $Y$ is uniformly rational.
\end{proposition}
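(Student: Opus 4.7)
The plan is to exploit the determinantal presentation of $Y$ recalled just above the statement. Writing $Y = \{\det A(x) = 0\}$ with $A = A(x_0,\dots,x_4)$ a $3\times 3$ matrix of linear forms, I would work with the projective incidence variety
\[
X := \{(x, [v]) \in \PP^4 \times \PP^2 \mid A(x) v = 0\}
\]
together with its two projections $p : X \to \PP^4$ and $q : X \to \PP^2$. A rank-stratification of $A$ will identify $p$ as a small resolution $X \to Y$: the fiber of $p$ over $x$ is the projectivized kernel of $A(x)$, which is empty when $\mathrm{rank}(A(x))=3$, a single point when $\mathrm{rank}(A(x))=2$ (the smooth locus of $Y$), and a $\PP^1$ when $\mathrm{rank}(A(x))=1$ (each of the six nodes).

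The heart of the proof will be to show that $q : X \to \PP^2$ is a Zariski-locally trivial $\PP^1$-bundle. Writing $A(x) = \sum_{i=0}^{4} x_i A_i$ with constant $3\times 3$ matrices $A_i$, the fiber of $q$ over $[v]$ is the linear subspace of $\PP^4$ cut out by the three linear forms $(A(x)v)_j = \sum_i (A_i v)_j\, x_i$, whose $3\times 5$ coefficient matrix is $B(v) = \big((A_i v)_j\big)_{j,i}$, linear in $v$. This fiber is a $\PP^1$ precisely when $B(v)$ has rank $3$. If instead $B(v_0)$ had rank at most $2$ for some $v_0 \neq 0$, then $\ker B(v_0) \subset \CC^5$ would be at least $3$-dimensional, projectivizing to a plane $\PP^2 \subset \PP^4$ on which $A(x)v_0 \equiv 0$; every such $x$ would make $A(x)$ rank-deficient and thus lie in $Y$. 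But in case J9 of \cite{FinkWern89}, $Y$ contains no plane, a contradiction. So $B$ is everywhere surjective and $X$ is the projectivization of the rank-$2$ kernel bundle of the surjective bundle map $\sO_{\PP^2}^{\oplus 5} \to \sO_{\PP^2}(1)^{\oplus 3}$.

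From this $\PP^1$-bundle structure the uniform rationality of $X$ follows at once: over any $U \subset \PP^2$ biregular to an open of $\IA^2$ one has $q^{-1}(U) \cong U \times \PP^1$, which is covered by two opens $U \times \IA^1$, each biregular to an open of $\IA^3$. The ``other'' algebraic small resolution is produced in the same fashion from the transposed condition $u^{T} A(x) = 0$, and by \cite{FinkWern89} these two exhaust the algebraic small resolutions in case J9. The main obstacle I foresee is the rank computation in the previous paragraph: it is precisely the geometric translation of ``no plane in $Y$'' into ``$q$ is a $\PP^1$-bundle'', and once it is in hand the rest is mechanical.
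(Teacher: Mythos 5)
Your proposal follows essentially the same route as the paper's proof: the determinantal incidence variety $X_1=\{A(x)v=0\}$ (and its transpose $X_2$), the second projection to $\PP^2$, and the translation of Zariski-local triviality of that $\PP^1$-bundle into the absence of a plane in $Y$ --- your argument that a rank drop of $B(v_0)$ forces a plane $\{A(x)v_0\equiv 0\}\subset Y$ is, if anything, a slightly cleaner way to produce the plane than the paper's row/column reduction plus Laplace expansion. The one inaccuracy is your closing claim that these two resolutions exhaust the algebraic small resolutions in case J9: all $2^6$ small resolutions are algebraic there while only (at most) two are projective, so for an arbitrary algebraic $X$ you must add the observation (as the paper does) that uniform rationality is Zariski-local and every algebraic small resolution agrees, Zariski-locally over $Y$ near each node, with $X_1$ or $X_2$.
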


\begin{proof}
By \cite{FinkWern89} there are (at most ) two projective small resolutions (possibly isomorphic). We can now describe them as follows:
\begin{gather*}
\PP^4 \times \PP^2 \supset X_1 := \left\{  \left( (x_0: \dots : x_4), (\xi : \eta : \zeta ) \right)  \mid A(x_0, \dots , x_4) (\xi, \eta , \zeta )^t =0  \right\}
\end{gather*}
and 
\begin{gather*}
\PP^4 \times \PP^2 \supset X_2 := \left\{  \left( (x_0: \dots : x_4), (\xi : \eta : \zeta ) \right)  \mid (\xi, \eta , \zeta ) A(x_0, \dots , x_4) = 0  \right\} .
\end{gather*}
The cases are analogous and we treat $X=X_1$. We claim that $X_1$ is a Zariski locally trivial projective bundle over $\PP^2$ of rank $1$ (via the projection to the second factor): indeed, the fiber over $(\xi : \eta : \zeta )$ is given by the vanishing of three linear forms in $\PP^4$, the set of zeros of the generalized column of $A$ that arises by taking the linear combination of the columns with coefficients $(\xi, \eta , \zeta )$. Suppose the three linear forms were not independent: then, by suitable invertible column and row operations, one can transform $A$ into a matrix which has a zero entry in some position. Then Laplace expansion shows that $Y$ contains a plane, contradicting the assumption. 

For $X_2$ the argument is the same, interchanging the role of columns and rows. 

An arbitrary algebraic resolution of $Y$ (Zariski-)locally looks like one of these two projective resolutions in this case. Therefore we get the assertion in general. 
\end{proof}

By Lemma \ref{lFW}, we may thus assume that $Y$ contains a plane $E$, which contains the node $P$. In an affine neighborhood $U_P$ of $P=0$ we have the descriptions of the small resolutions locally above $P=0$ as $U_{1,P}$, $U_{2,P}$ as at the end of the proof of Lemma \ref{lAlgebraic}.  We get projections 
\[
\pi_1 : U_{1, P} \to \PP^1 , \; \pi_2 : U_{2, P} \to \PP^1 . 
\]
Here $U_{1, P}$ compactifies to $\bar{\pi}_1 : \bar{U}_{1, P} \to \PP^1$, a quadric fibration over $\PP^1$, whereas $U_{2, P}$ compactifies to $\bar{\pi}_2 : \bar{U}_{2, P} \to \PP^1$ a fibration in degree $4$ del Pezzo surfaces (intersections of two quadrics in $\PP^4$) over $\PP^1$.

Note that in case of $U_{1, P}$ the strict transform of the plane $x=y=0$ contains the exceptional $\PP^1$'s whereas in the case of $U_{2, P}$, the strict transform of that plane intersects the exceptional $\PP^1$'s in a point (with coordinate $(0:1)$ in each case). Also notice that for any algebraic small resolution $X$ of $Y$, $P$ a node of $Y$ in $E$, $X$ will, locally above $P$, be isomorphic to $U_{1, P}$ or $U_{2, P}$.

An idea to prove uniform rationality in these cases is the following: in the first case we may try to project from a generic section of $\bar{\pi}_1$, in the second case we want to project from a section of $\bar{\pi}_2$ not meeting any lines in fibers to transform the fibration into a fibration in cubic surfaces over $\PP^1$; then we can choose another section in this cubic surface fibration and consider the relative reflection in this section. 

However, there is a simpler approach. Suppose (in homogeneous global coordinates)
\[
Y = \{ lA - mB = 0 \}
\]
where $l, m$ are linear forms, $A, B$ quadratic forms in $x_0, \dots , x_4$. Then, following \cite{Segre87}, we can consider $Y$ as a projection from the point $(0:0:0:0:0:1)$ of the variety $X$ in $\PP^5$ given by 
\[
x_5 l - B = 0, \; x_5 m - A =0 .
\]
On this birational model $X$ of $Y$, the four nodes in the plane $l=m=0$ are replaced by $\PP^1$'s whereas the plane itself is contracted to $(0:0:0:0:0:1)$. Moreover, $X$ is smooth in a neighborhood of the four exceptional $\PP^1$'s and a local model of the small resolution $U_{2, P}$ (the case where the strict transform of the plane intersects the exceptional $\PP^1$'s transversely) is obtained by blowing up $(0:0:0:0:0:1)$. Since the smooth locus of $X$ is uniformly rational by Example \ref{eQuadrics}, and the property of being uniformly rational is stable under blow-ups along smooth centers by \ref{pBlowup}, we get that the small resolution $U_{2, P}$ is uniformly rational.

For $U_{1, P}$ we may consider the variety
\[
\PP^4 \times \PP^1 \supset X : = \left\{   \left( (x_0: \dots : x_4), (\xi : \eta ) \right)  \mid \left( \begin{array}{cc} A & B \\ m & l \end{array}\right) \left( \begin{array}{c} \xi \\ \eta \end{array} \right) = 0   \right\}
\]
with projections to $Y \subset \PP^4$ and to $\PP^1$. Over a Zariski neighborhood of the four nodes in $Y$ in the chosen plane, $X$ is isomorphic to $U_{1, P}$. Hence, uniform rationality in this case follows from 

\begin{lemma}\xlabel{lQuadricFibration}
Let $\mathcal{X} \to \PP^1$ be a fibration in two-dimensional quadrics in a projective bundle $\PP (\mathcal{E}) \to \PP^1$ (where $\mathcal{E}$ is some rank $4$ vector bundle over $\PP^1$). Then the smooth locus of $\mathcal{X}$ is uniformly rational.
\end{lemma}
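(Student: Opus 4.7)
The plan is, for any prescribed $p \in \mathcal{X}_{\mathrm{sm}}$, to exhibit a birational map from $\mathcal{X}$ to a smooth projective bundle over $\PP^1$ that is a local isomorphism at $p$. The map will be the relative projection from a suitably chosen section $\sigma$ of $\pi : \mathcal{X} \to \PP^1$; fibrewise it is the classical birational projection of a two-dimensional quadric $Q \subset \PP^3$ from a point of $Q$ onto $\PP^2$.

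Set $t_0 := \pi(p)$ and denote the fibre by $Q_{t_0}$. Since the generic fibre of $\pi$ is a quadric surface over the $C_1$-field $\CC(\PP^1)$, Tsen's theorem supplies a section $\sigma_0$; projecting from $\sigma_0$ gives a birational map to a $\PP^2$-bundle over $\PP^1$, whose many sections pull back to sections of $\mathcal{X}$, so that the evaluation map at $t_0$ from the space of sections of $\pi$ is dominant onto $Q_{t_0}$. I then pick $\sigma$ with (i) $\sigma(\PP^1) \subset \mathcal{X}_{\mathrm{sm}}$; (ii) $q := \sigma(t_0) \neq p$ is a smooth point of $Q_{t_0}$; (iii) the line of the fibre $\PP(\mathcal{E}_{t_0}) \simeq \PP^3$ joining $q$ to $p$ is not contained in $Q_{t_0}$. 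The locus in $Q_{t_0}$ forbidden by (ii)--(iii) is a proper closed subset, namely the singular locus of $Q_{t_0}$ together with the two rulings through $p$ when $Q_{t_0}$ is smooth, or the single ruling through $p$ when $Q_{t_0}$ is a quadric cone; dominance of evaluation then ensures such $\sigma$ exists.

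The section $\sigma$ corresponds to a rank-$1$ subbundle $\mathcal{L} \subset \mathcal{E}$, and the surjection $\mathcal{E} \to \mathcal{E}/\mathcal{L}$ induces
\[
\pi_\sigma : \PP(\mathcal{E}) \dasharrow \PP(\mathcal{E}/\mathcal{L}),
\]
whose restriction to $\mathcal{X}$ is birational onto the $\PP^2$-bundle $\PP(\mathcal{E}/\mathcal{L}) \to \PP^1$; it is undefined precisely along $\sigma(\PP^1)$ and contracts, in each fibre $Q_t$, the lines of $Q_t$ through $\sigma(t)$. Provided $p$ is a smooth point of $Q_{t_0}$, conditions (ii)--(iii) make $\pi_\sigma$ a local isomorphism at $p$. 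Since every vector bundle on $\PP^1$ splits, the target $\PP(\mathcal{E}/\mathcal{L}) \to \PP^1$ is a smooth projective toric threefold, hence uniformly rational by Example \ref{eToric}, and pulling an affine toric chart around $\pi_\sigma(p)$ back through $\pi_\sigma$ yields a Zariski open neighborhood of $p$ in $\mathcal{X}_{\mathrm{sm}}$ isomorphic to an open subset of $\IA^3$.

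The main obstacles are twofold. First, one has to guarantee the existence of $\sigma$ meeting (i)--(iii) simultaneously; a minimum-degree Tsen section may be rigid, so the degree has to be raised, e.g.\ through the abundance of sections of the auxiliary $\PP^2$-bundle produced by a preliminary projection, to ensure that the evaluation map at $t_0$ is dominant onto a Zariski open of $Q_{t_0}$. Second, the construction breaks down precisely when $p \in \mathcal{X}_{\mathrm{sm}}$ happens to sit at the vertex of a cone fibre $Q_{t_0}$: then the unique ruling of $Q_{t_0}$ through $q$ also passes through $p$, violating (iii), and no section $\sigma$ can resolve this. Such bad points form a finite set, and $\mathcal{X}$ is smooth there with the base direction transverse to the (singular) fibre, so I expect to dispose of them by a separate direct calculation in local coordinates (or by a different Cremona-type trivialisation) producing an $\IA^3$-chart at each such $p$; this local step is the technical crux one cannot bypass with the projection construction alone.
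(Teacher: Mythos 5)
Your main construction --- relative projection from a section that passes through the fibre of $p$ but not through $p$ itself --- is exactly the argument the paper uses for points lying on smooth fibres, and that part of your proposal is sound. The gap lies in your description of where this construction breaks down. You claim the only residual points are the vertices of cone fibres, a finite set. In fact condition (iii) is unsatisfiable whenever $p$ is a \emph{singular point of its fibre} $Q_{t_0}$ while still being a smooth point of $\mathcal{X}$: since $p$ has multiplicity $2$ on the quadric $Q_{t_0}$, any line joining $p$ to another point $q\in Q_{t_0}$ meets $Q_{t_0}$ with total multiplicity at least $3$ and hence lies in $Q_{t_0}$, so no choice of section can help, and the whole line through $p$ is contracted by $\pi_\sigma$. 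Besides cone vertices, this locus contains, for every fibre degenerating to two planes meeting transversally (local model $x_0x_1 - tx_2x_3 = 0$), the entire double line $\{x_0=x_1=0\}$ of the fibre minus the two points where the total space is singular --- a \emph{curve} of smooth points of $\mathcal{X}$, not a finite set. This two-planes degeneration is generic for quadric pencils and is precisely the case the paper spends most of its proof on; your proposal does not address it at all.

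Even for the cone vertices that you do identify, you explicitly defer the argument (``I expect to dispose of them by a separate direct calculation'') and call it the technical crux that the projection construction cannot reach; as written the proof is therefore incomplete exactly where the real work begins. The paper handles the residual locus by a different device: it compactifies the local models to pencils $\mathcal{Q}_1=\{s(x_0x_1+x_2^2)+tx_3^2=0\}$ and $\mathcal{Q}_2=\{sx_0x_1-tx_2x_3=0\}$ (and $\mathcal{C}=\{sx_0x_1+tx_2^2=0\}$ when the generic fibre is itself singular) and projects to $\PP^3$ (resp.\ $\PP^2$) \emph{along the pencil direction} rather than from a section. This exhibits the total space as the blow-up of $\PP^3$ along the base locus of the pencil --- a quadrangle of lines, resp.\ a non-reduced conic --- which is then analysed via successive blow-ups with smooth centers and Proposition \ref{pBlowup}. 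From that point of view the cone vertex is immediate (it lies off the base locus, so the projection to $\PP^3$ is an isomorphism near it), but the points on the double lines require the full quadrangle analysis. You would need to supply arguments of this kind for the entire residual locus to complete the proof.
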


\begin{proof}
Suppose first that $x \in \mathcal{X}$ is a point in the total space in a neighborhood of which all fibers are smooth quadrics. Then we can see that such a point has a Zariski open neighborhood isomorphic to one in affine space by choosing a local section of the quadric fibration passing through the fiber in which $x$ lies, but not through the point $x$. Then we relatively project from that section.

If $x$ lies on a singular fiber, we have to study local normal forms of the possible degenerations of such quadric fibrations near $x$. Generally, for quadrics in $\PP^n$, they are given by
\[
x_0 + \dots + x_r  + t (x_{r+1} + \dots + x_s) , 
\]
for $t \to 0$ where $0 \le s \le n$ plus $1$ is the generic rank of the quadrics in the family. Let us analyze first what happens if the generic quadric in our family is smooth. Then we have two cases where we can write the degenerations respectively as
\[
x_0x_1 - t x_2x_3 = 0 
\]
(degeneration to two planes meeting transversally) or
\[
x_0x_1 + x_2^2 + tx_3^2 =0
\]
(degeneration to a quadric cone). The case of a degeneration to a double plane is not interesting because then the total space will be singular along the points of the fiber containing the double plane. 

\

In the quadric cone case, the family compactifies to 
\[
\mathcal{Q}_1 = \{ s(x_0x_1 + x_2^2) + tx_3^2 =0 \}
\]
where $(s:t)$ are now homogeneous coordinates on $\PP^1$. The total space of $\mathcal{Q}_1$ is smooth away from the fiber over $(s:t) = (0:1)$. The points of that fiber which are singular on the total space are precisely those lying on the conic which has $x_0x_1 + x_2^2=0$ and $x_3=0$.  Projection to $\PP^3$ shows that $\mathcal{Q}_1$ is obtained in the following way: blow up $\PP^3$ along the nonreduced subscheme given by $x_3^2=0$, $x_0x_1 + x_2^2=0$; this is the conic $C$ given by $x_0x_1 + x_2^2=0$ in the $x_3=0$ plane infinitesimally thickened in the $x_3$-direction. Geometrically, this means that one blows up $\PP^3$ first in the reduced conic to obtain a smooth variety $Z_1$. This contains the projectivisation of the normal bundle of the conic as an exceptional divisor $E_1$, and inside this exceptional divisor one has the locus of points corresponding to the directions in which $C$ is infinitesimally thickened, which is another conic $C_1$. Now blow up $X_1$ in $C_1$ to obtain a smooth variety $X_2$ (which is uniformly rational). The strict transform of $E_1$ on $X_2$ has normal bundle restricted to one of the $\PP^1$'s in which $E_1$ is fibered equal to $\mathcal{O}(-2)$. We contract this strict transform and obtain precisely $\mathcal{Q}_1$ which is singular along a conic, the contracted strict transform of $E_1$. 

\

In the case of the two planes meeting transversally, one can compactify the family to
\[
\mathcal{Q}_2 = \{ sx_0x_1 - t x_2x_3 =0 \}
\]
where again $(s:t)$ are now homogeneous coordinates on $\PP^1$. Projection to $\PP^3$ shows that in this case, the total space of the compactification is the blow up of $\PP^3$ in the quadrangle of lines given by $x_0x_1 = x_2x_3 =0$. Now this blow-up $\mathcal{Q}_2$ is uniformly rational by Proposition \ref{pBlowup} away from the locus of $\mathcal{Q}_2$ which lies over the four vertices of the quadrangle, located at where all but one of the $x_i$ vanish, and where the blow-up center in $\PP^3$ is not smooth. Moreover, $\mathcal{Q}_2$ is also uniformly rational away from the fibers over $(1:0)$ and $(0:1)$. But the eight points where $(s:t)= (0:1)$ or $(1:0)$ and all but one of the $x_i$ vanish is exactly the singular locus of the total space of $\mathcal{Q}_2$. Hence its entire smooth locus is uniformly rational.

It remains to consider the case where the general quadric in the fibration $\mathcal{X} \to \PP^1$ is not smooth. In this case, the family is a cone over the degeneration of a family of plane conics to two lines or a double line, which can be written as
\[
\mathcal{C} = \{ s x_0 x_1 + t x_2^2 = 0 \} .
\]
We claim that the smooth locus of the total space of this conic fibration is uniformly rational, which will also imply the result for the smooth locus of the cone over it. $\mathcal{C}$ is only singular in the two points above $(0:1)$ given by $x_0=x_2=0$ and $x_1=x_2=0$. Geometrically, $\mathcal{C}$ is the blow-up (via the projection to $\PP^2$) of $\PP^2$ in the nonreduced subscheme $x_0=x_2^2=0$ and $x_1=x_2^2=0$ which are two points on the line $x_2=0$ with an infinitesimal thickening in the $x_2$ direction. Geometrically, we blow up $\PP^2$ in the corresponding reduced points, then blow up the resulting space again in the points of the exceptional $\PP^1$'s, $E_1$, $E_2$, corresponding to the direction of the infinitesimal thickenings, and finally contract the strict transforms of $E_1$, $E_2$ on the second blow-up where they have become $(-2)$-curves. This gives $\mathcal{C}$ and proves that its smooth locus is uniformly rational, using Proposition \ref{pBlowup}.
\end{proof}

Hence in summary, we obtain 

\begin{theorem}\xlabel{tSmallCubic}
All small algebraic resolutions of nodal cubic threefolds $Y \subset \PP^4$ are uniformly rational.
\end{theorem}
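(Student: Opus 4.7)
The plan is to consolidate the local and global ingredients developed throughout this subsection. Let $\sigma : X \to Y$ be an algebraic small resolution of a nodal cubic threefold $Y \subset \PP^4$. My first step is to observe that $\sigma$ is an isomorphism over the smooth locus $Y^{\mathrm{sm}}$ of $Y$, and by Example \ref{eCubics} the smooth locus of a singular cubic hypersurface is uniformly rational; hence every point of $X$ lying over $Y^{\mathrm{sm}}$ already has a Zariski neighborhood isomorphic to an open subset of $\IA^4$. What remains is to produce such neighborhoods for points in the exceptional fibers of $\sigma$ over the nodes of $Y$.

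Second, I would split into the two cases furnished by Lemma \ref{lFW}. In the case J9, where $Y$ has six nodes and contains no plane, Proposition \ref{pSixPoints} settles the result directly via the determinantal description of $Y$ and its identification with a Zariski-locally trivial $\PP^1$-bundle over $\PP^2$. Otherwise every node $P$ of $Y$ lies on a plane $E \subset Y$, and by Lemma \ref{lAlgebraicity} the resolution $X$, restricted to a small affine neighborhood $U_P$ of $P$, is isomorphic to one of the two explicit local models $U_{1,P}$ or $U_{2,P}$ constructed above.

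Third, I would dispose of these two local models using the tools already in place. For $U_{2, P}$ (the case where the strict transform of the plane meets the exceptional $\PP^1$'s transversely), I would invoke the Segre projection, which realizes $U_{2, P}$ as the blow-up of a smooth point on the intersection of the two quadrics $x_5 l = B$ and $x_5 m = A$ in $\PP^5$; the smooth locus of that complete intersection is uniformly rational by Example \ref{eQuadrics}, and Proposition \ref{pBlowup} takes care of the blow-up. For $U_{1, P}$ (the case where the plane sits inside the exceptional locus), the total space embeds into a two-dimensional quadric fibration over $\PP^1$, whose smooth locus contains $U_{1, P}$, so Lemma \ref{lQuadricFibration} yields uniform rationality there.

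The final packaging is essentially bookkeeping: at any point of $X$ one picks a Zariski chart coming either from $Y^{\mathrm{sm}}$, from the Segre birational model followed by a blow-up, or from the quadric fibration of Lemma \ref{lQuadricFibration}, and all three kinds of charts are Zariski open in $X$. The main obstacle in the whole program is not this packaging but Lemma \ref{lQuadricFibration} itself, whose proof required a careful case-by-case normal-form analysis of the degenerate fibers of a pencil of quadrics together with an explicit identification of each degeneration with a blow-up/contract sequence starting from $\PP^3$; once that lemma is in hand, the theorem follows by combining it with Proposition \ref{pSixPoints}, Example \ref{eCubics}, Example \ref{eQuadrics} and Proposition \ref{pBlowup} as described.
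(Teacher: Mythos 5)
Your proposal is correct and follows essentially the same route as the paper, whose proof of Theorem \ref{tSmallCubic} is exactly the material of Subsection \ref{ssAlg}: the dichotomy of Lemma \ref{lFW}, Proposition \ref{pSixPoints} for the six-node determinantal case, the Segre model in $\PP^5$ plus Example \ref{eQuadrics} and Proposition \ref{pBlowup} for $U_{2,P}$, and Lemma \ref{lQuadricFibration} for $U_{1,P}$. Nothing essential is missing or different.
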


\subsection{Alternative proof that small resolutions are uniformly rational}\xlabel{ssAlternative}
There is an alternative and shorter proof that all small algebraic resolutions of nodal cubic threefolds $Y \subset \PP^4$ are uniformly rational; however, it uses heavily the geometric picture developed in the preceding subsection. Moreover, the determinantal constructions in the preceding subsections are more explicit and we obtained results of independent interest such as Lemma \ref{lQuadricFibration} in the course of the argument there. Therefore we think it worthwhile to include both methods.

The crucial observation is simply that if a small resolution is algebraic, all nodes on $Y$ correspond to double points $q$ of the associated curve $C$ where \emph{two irreducible smooth components} $C_1$ and $C_2$ of $C$ meet transversely. If we choose a Zariski open $U$ around $q$ in $\PP^3$ in which $C_1$ and $C_2$ are smooth, then we can first blow up $U$ in $C_1$ and then in $C_2$, or in the other order, first in $C_2$, then in $C_1$. This gives us two uniformly rational varieties, and each of them is isomorphic to an open neighborhood of the exceptional $\PP^1$ over the node of $Y$ corresponding to $q$ in one or the other small resolution that one gets locally above that node. The result follows.

\subsection{Nonalgebraic Moishezon resolutions}\xlabel{ssMoishezon}

Here we show that  Question \ref{qGromov2} has in general a negative answer for the nonalgebraic small resolutions of nodal cubic threefolds. 

\begin{proposition}\xlabel{pMoishezon}
Let $Y$ be a Lefschetz cubic with one singular point $P$, $X$ a small resolution. Then $X$ is a non-algebraic Moishezon manifold where for no point $x$ of the exceptional curve one can find a bimeromorphic map $X \dasharrow \PP^3$ defined at $P$. 
\end{proposition}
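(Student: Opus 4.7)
My plan is as follows. First I derive the non-algebraicity of $X$ and the vanishing of $[\ell] \in H_2(X, \QQ)$ directly from Remark \ref{rAlgebraicity}: the associated curve of a Lefschetz cubic is a smooth irreducible curve of bidegree $(3,3)$ on $\PP^1 \times \PP^1$, so no component has bidegree $(a,b)$ with $a \ne b$; hence $Y$ has no projective small resolution, by Lemma \ref{lAlgebraic} no algebraic one either, and the class of the exceptional $\PP^1 \subset X$ vanishes homologically on any small resolution.

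For the main assertion I argue by contradiction: suppose $\varphi : X \dasharrow \PP^3$ is a bimeromorphic map defined at some $x \in \ell$ (reading ``defined at $P$'' as ``defined at $x$''). Its indeterminacy locus has codimension $\ge 2$, so $\varphi|_\ell$ is regular outside a finite set, and $\overline{\varphi(\ell)} \subset \PP^3$ is either a curve $C'$ or a single point $p$. The first possibility is ruled out by a direct intersection computation: for a generic hyperplane $H \subset \PP^3$, pulled back through a resolution of the indeterminacy of $\varphi$ to an effective divisor $D_H \subset X$, one has set-theoretically $D_H \cdot \ell = (\deg \varphi|_\ell)(H \cdot C') > 0$, whereas the projection formula and $[\ell] = 0$ force $[D_H] \cdot [\ell] = 0$; contradiction.

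Hence $\varphi$ must contract $\ell$ to a single point $p \in \PP^3$, and this second case is the main obstacle. To handle it I precompose with the big resolution map $\widetilde{Y} = \mathrm{Bl}_P Y \to X$, which contracts one ruling of the exceptional smooth quadric $Q \cong \PP^1 \times \PP^1 \subset \widetilde{Y}$; the resulting bimeromorphic map $\widetilde{\varphi} : \widetilde{Y} \dasharrow \PP^3$ then collapses the whole of $Q$ onto $p$. Resolving its indeterminacy produces a smooth projective threefold $W$ with a birational morphism $g : W \to \PP^3$ contracting the strict transform $\widetilde{Q} \subset W$ of $Q$ to $p$. The surface $\widetilde{Q}$ is either $Q$ itself or $Q$ blown up at finitely many points (depending on which centers the resolution blows up), hence has Picard number at least $2$, and in particular $\widetilde{Q} \not\cong \PP^2$. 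But the Fujiki--Nakano contractibility criterion recalled in the introduction forces any smooth irreducible divisor of a smooth threefold that is analytically contracted to a smooth point to be isomorphic to $\PP^2$ (with normal bundle $\mathcal{O}_{\PP^2}(-1)$). This contradiction completes the argument.
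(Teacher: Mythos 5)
Your opening reductions are fine: the non-algebraicity of $X$ and the vanishing of $[\ell]$ in $H_2(X,\ZZ)$ do follow from Remark \ref{rAlgebraicity} and Lemma \ref{lAlgebraic} (the paper simply cites Moishezon for the latter fact), and your treatment of the case where $\overline{\varphi(\ell)}$ is a curve is in substance the paper's argument: since $[\ell]=0$, no compact analytic surface in $X$ can meet the irreducible curve $\ell$ in a nonempty finite set, and the transform of a generic hyperplane produces exactly such a surface.

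The case where $\ell$ is contracted to a point, however, is handled incorrectly. The statement you extract from the Fujiki--Nakano criterion --- that a smooth irreducible divisor of a smooth threefold which is contracted to a smooth point by a birational morphism must be isomorphic to $\PP^2$ with normal bundle $\mathcal{O}_{\PP^2}(-1)$ --- is not what that criterion says, and it is false. Fujiki--Nakano gives a condition for a divisor already carrying a $\PP^r$-bundle structure to be blown down; it puts no constraint on arbitrary divisorial components of fibres of a birational morphism. Concretely: blow up $\PP^3$ at a point $p$, then blow up a smooth conic inside the exceptional $\PP^2$; the composite is a birational morphism of smooth projective threefolds onto $\PP^3$ whose second exceptional divisor is a Hirzebruch surface contracted to $p$. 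In your situation $g\colon W\to\PP^3$ is far from being an isomorphism off $\widetilde{Q}$ (it also contracts all the divisors introduced in resolving the indeterminacy), so no classification of elementary divisorial contractions applies, and knowing $\widetilde{Q}\not\cong\PP^2$ yields no contradiction. (The intermediate claim that $\widetilde{Q}$ is $Q$ or $Q$ blown up at finitely many points is also unjustified without pinning down the resolution, but that is secondary.) The paper disposes of this case by the same homological mechanism as the curve case: if $\overline{\varphi(\ell)}=\{p\}$, take a surface $T\subset\PP^3$ through $p$ --- it meets $\overline{\varphi(\ell)}$ in a finite set --- and let $S$ be its bimeromorphic transform on $X$; then $S$ is a compact analytic surface meeting $\ell$ in a nonempty finite set of points, which again contradicts $[\ell]=0$. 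So the detour through the big resolution and a contraction criterion should be replaced by the intersection argument you already used in the curve case, applied uniformly to both cases.
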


\begin{proof}
As remarked in \cite{Moi67}, the exceptional $C=\PP^1$ is homologous to zero in this case. Hence every compact complex surface $S$  contained in $X$ and intersecting $C$ must contain $C$ entirely. This excludes the existence of algebraic charts around points of $C$ (otherwise we could take for $S$ the birational transform of a surface intersecting the image of $C$ in that chart in a finite number of points) and it also excludes the existence of bimeromorphic maps $X \dasharrow \PP^3$ defined at a point of $C$: otherwise we could take for $S$ the bimeromorphic transform of some surface in $\PP^3$ meeting the closure of the image of $C$ in $\PP^3$ in a finite number of points.
\end{proof}

This argument also shows that all small non-algebraic resolutions of nodal cubic threefolds do not satisfy what is asked in Question \ref{qGromov2} because always one node is resolved zero-homologously by Remark \ref{rAlgebraicity}, (2), and Lemma \ref{lAlgebraic}.

\subsection{Big resolutions of nodal cubic threefolds}\xlabel{ssBigResolutions}

Here we prove

\begin{theorem}\xlabel{tBigResolutions}
Let $X \subset \PP^4$ be a nodal cubic threefold, $p\in X$ a node, and $\hat{X}=\mathrm{Bl}_p (X)$ the big resolution of $p$. Then $\hat{X}$ is uniformly rational in a neighborhood of the exceptional divisor $Q \simeq \PP^1 \times \PP^1$. 
\end{theorem}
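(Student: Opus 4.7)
The plan is to transport the problem to $\PP^3$ via the isomorphism $\hat X \simeq \mathrm{Bl}_C(\PP^3)$ given by projection away from $p$, as recalled in Subsection \ref{ssSpecialBig}. Here $C\subset Q_{\PP^3}\subset\PP^3$ is the associated $(3,3)$-curve, a complete intersection of the tangent quadric $Q_{\PP^3}$ with a cubic surface $W$, whose only singularities are nodes (one for each node of $X$ other than $p$), all lying on $Q_{\PP^3}$. Under this identification the exceptional divisor $Q\simeq\PP^1\times\PP^1$ of $\hat X$ corresponds to the strict transform $\widetilde{Q}_{\PP^3}$ of the quadric; since $C$ is Cartier in the smooth surface $Q_{\PP^3}$, this strict transform is smooth and isomorphic to $Q_{\PP^3}$, and a direct local computation shows that $\widetilde{Q}_{\PP^3}$ avoids the singularities of $\mathrm{Bl}_C(\PP^3)$, which sit in the ``opposite ruling direction'' above each node of $C$. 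The theorem thus reduces to producing, near every point of $\widetilde{Q}_{\PP^3}$, a Zariski neighborhood in $\mathrm{Bl}_C(\PP^3)$ isomorphic to an open subset of $\mathbb{A}^3$.

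Points of $\widetilde{Q}_{\PP^3}$ lying over a smooth point of $C$ (or outside $C$) are handled directly by Proposition \ref{pBlowup}, since there $\mathrm{Bl}_C(\PP^3)$ is locally the blow-up of $\PP^3$ along a smooth curve. The nontrivial case is a point of $\widetilde{Q}_{\PP^3}$ lying over a node $q\in C$, which I attack by an explicit polynomial straightening of the quadric. Choose projective coordinates so that $Q_{\PP^3}=\{X_0X_3-X_1X_2=0\}$ and $q=[1:0:0:0]$, and work in the affine chart $U=\{X_0\neq 0\}\simeq\mathbb{A}^3$ with coordinates $(x_1,x_2,x_3)$; then $Q_{\PP^3}\cap U=\{x_3=x_1x_2\}$ and $q$ is the origin. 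The polynomial automorphism
\[
\varphi\colon (x_1,x_2,x_3)\mapsto (x_1,x_2,\; x_3':=x_3-x_1x_2)
\]
of $\mathbb{A}^3$ straightens $Q_{\PP^3}\cap U$ onto the coordinate plane $L=\{x_3'=0\}$, and since $C\cap U$ is the complete intersection of $Q_{\PP^3}\cap U$ with the cubic $\{F_3=0\}$, its ideal in the new coordinates reduces modulo $x_3'$ to
\[
\bigl(x_3',\; f(x_1,x_2)\bigr), \qquad f(x_1,x_2):=F_3(x_1,x_2,x_1x_2),
\]
with $f$ a polynomial in $(x_1,x_2)$ alone having a node at the origin. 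Now the chart construction from the proof of Proposition \ref{pBlowup} applies verbatim: the chart of $\mathrm{Bl}_{C\cap U}(U)$ corresponding to the direction $[A:B]=[f:x_3']=[1:0]$ is the affine subvariety
\[
V \;=\; \bigl\{\,(x_1,x_2,x_3',b)\in U\times\mathbb{A}^1 \;:\; x_3'=b\,f(x_1,x_2)\,\bigr\},
\]
isomorphic to $\mathbb{A}^3$ via $(x_1,x_2,b)$; inside $V$ the strict transform $\widetilde{Q}_{\PP^3}$ is the coordinate plane $\{b=0\}$, and the unique point of $\widetilde{Q}_{\PP^3}$ above $q$ is the origin of $V$, which visibly lies in $V$.

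The key observation, and the step requiring care, is that the explicit polynomial straightening $\varphi$ reduces the ideal of $C$ to a complete-intersection form $(x_3',f(x_1,x_2))$ in which the second generator involves only the coordinates of the straightened plane; this is what allows the chart construction of Proposition \ref{pBlowup} to apply despite the fact that $C$ itself is not smooth at $q$. I expect the only mildly subtle point to be the verification that $V$ indeed covers the (unique) point of $\widetilde{Q}_{\PP^3}$ above $q$ and that $\widetilde{Q}_{\PP^3}$ avoids the $A_1$-singularity of $\mathrm{Bl}_C(\PP^3)$ sitting in the opposite direction $[A:B]=[0:1]$; both are immediate from the local form above. Carrying this out at each node of $C$ (using its own choice of affine chart) and combining with the generic case yields Zariski $\mathbb{A}^3$-neighborhoods around every point of $\widetilde{Q}_{\PP^3}$, proving uniform rationality of $\hat X$ in a Zariski neighborhood of $Q$.
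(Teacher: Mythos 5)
Your proof is correct, but it takes a genuinely different route from the paper's. The paper uses the model $\mathrm{Bl}_C(\PP^3)$ only in the Lefschetz case (Proposition \ref{pBigRes}); for the general nodal case it instead projects from a general \emph{line} $l\subset X$ through $p$, realizes $\mathrm{Bl}_{\hat l}(\hat X)$ as a conic bundle over $\PP^2$ in which the strict transform of $Q$ is a section meeting every fiber in a smooth point, and then handles the points of that section lying on degenerate fibers by a lifting argument for the relative automorphism group scheme of the conic bundle over the completed local ring of the base; it must also let $l$ vary to cover all of $Q$ and dispose separately of the case of a plane through $l$ contained in $X$ (reducing it to the small resolutions). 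You instead push the projection from the node $p$ itself through to the nodal case of $C$: because $C$ is globally the complete intersection $(F_2,F_3)$ and the tangent quadric can be straightened to a coordinate plane by a polynomial automorphism of the affine chart, the ideal of $C$ becomes $(x_3', f(x_1,x_2))$ and the blow-up chart $\{x_3'=bf\}\simeq\IA^3$ visibly contains the point of the strict transform of the quadric over the node, while the ordinary double point of $\mathrm{Bl}_C(\PP^3)$ sits at $[f:x_3']=[0:1]$ in the opposite chart. The supporting identifications you rely on (that $Q$ corresponds to the strict transform of the quadric, that this strict transform is isomorphic to the quadric because $C$ is Cartier on it, and that it avoids the singular locus) all check out, so your argument is complete; it is shorter and more explicit, and it is the exact analogue for big resolutions of the paper's ``alternative proof'' for small resolutions in Subsection \ref{ssAlternative}, where the two smooth branches of $C$ at a node are blown up successively. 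What the paper's longer route buys is a reusable technique --- moving a section of a conic bundle into previously constructed charts by fiberwise automorphisms --- which is of independent interest beyond this particular theorem.
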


\begin{proof}
If $X$ has only $p$ as a single node, then this follows from Proposition \ref{pBigRes}. Moreover, it suffices to prove that every point $x\in Q$ has a Zariski open neighborhood isomorphic to a Zariski open subset in $\PP^3$ by the discussion in Example \ref{eCubics}. 

Consider the blow up of $\PP^4$ in $p$ and $\hat{X} \subset \mathrm{Bl}_p (\PP^4)$. Choose a general line $l \subset X$ passing through $p$. This corresponds to choosing a general point of the associated curve $C$. We want to look at the projection $\pi_l : X \dasharrow \PP^2$ from $l$ more closely. Let $\hat{l} \subset \hat{X}$ be the strict transform of $l$. Then $\pi_l$ induces a morphism
\begin{gather*}
p : \tilde{X} = \mathrm{Bl}_{\hat{l}} (\hat{X}) \to \mathbb{P}^2 .
\end{gather*}
We can assume that this is a conic bundle, in particular, flat: namely we may assume without loss of generality that there is no plane containing $l$ and entirely contained in $X$. If this were the case, then, locally above $p$, both small resolutions of $p$ would be algebraic, and the big resolution is a blow up of both of these. But we know already that small resolutions are uniformly rational and that the property is stable under blow ups in smooth centers.

Moreover, notice that the strict transform $\tilde{Q}$ of $Q$ on $\tilde{X}$ is the quadric $Q\simeq \PP^1 \times \PP^1$ blown up in one point (namely the point $P_l$ corresponding to the direction given by $l$), hence isomorphic to the blow up of $\PP^2$ in two points $a, b$. The exceptional divisors above $a$ and $b$ are simply the strict transforms of the two lines $L_1$, $L_2$ of the two rulings on $Q$ passing through the point $P_l$ corresponding to $l$.  The map
\[
p|_{\tilde{Q}} : \tilde{Q} = \mathrm{Bl}_{a, b} (\PP^2 ) \to \PP^2
\]
is simply the blow down map. In particular, the strict transforms of $L_1$ and $L_2$ map to points since the lines in the tangent cone of $X$ at $p$ corresponding to points in $L_1$ or $L_2$ all lie in the same plane with $l$.

Note now that outside of the exceptional divisor over $P_L$, $\tilde{Q}$ is isomorphic to $Q$, and an open neighborhood of $\tilde{Q}$ minus the total transforms of $L_1$, $L_2$ in $\tilde{X}$ is of course isomorphic to an open neighborhood of $Q\backslash (L_1\cup L_2)$ in $\hat{X}$. Moreover, letting $l$ vary over all points of the associated curve $C$, the open subsets $Q\backslash (L_1\cup L_2)$ cover all of $Q$: this could only fail if the associated curve consisted of a single component which is a line of one ruling of $Q$, which never happens since it is of bidegree $(3, 3)$. 

Hence it suffices now to prove: 
\begin{quote}
\textbf{Claim:} 
Let $\tilde{L}_i$ be the strict transform of $L_i$ on $\tilde{Q}$. Then every point of $\tilde{Q}^0:= \tilde{Q} \backslash (\tilde{L}_1 \cup \tilde{L}_2)$ has a Zariski open neighborhood in $\tilde{X}$ which is isomorphic to a Zariski open in $\PP^3$. 
\end{quote}

Suppose $\tilde{L}_i$ maps to $r_i\in \PP^2$. Then $\tilde{Q}^0$ is a section of 
\[
\tilde{X}^0 := \tilde{X} \backslash (p^{-1}(r_1) \cup p^{-1}(r_2))  \to (\PP^2 )^0:= \PP^2\backslash (r_1 \cup r_2). 
\]

Now remark that $\tilde{Q}^0$ will intersect each fiber of $p$ in a smooth point of that fiber: namely,  the existence of the section implies a splitting of the tangent space $T_x \tilde{X} \simeq T_{p(x)} S \oplus N_x$ and shows that $T_{y} \tilde{X} \to T_{p(y)} S$ is a submersion in a neighborhood of $y$. Hence the fiber of $p$ through $y$ is smooth at $y$. Compare also \cite{A-B-B11}, Lemma 2.8 for a generalization of this observation.

 To prove the claim, we now use the group scheme $\mathcal{G}$ over $S=(\PP^2)^0$ whose fiber $\mathcal{G}_s$ over a point $s\in S$ is the connected component of the identity of the automorphism group of the fiber $\tilde{X}_s$, which is a conic. If this  conic is smooth we get just $\mathrm{SO}_3 (\CC )$, whereas, if the fiber is a union of two lines meeting transversely, we get the automorphism group of the quadratic form $xy$ (where $x,y,z$ are coordinates in $\PP^2$). This is an extension by the additive group $\CC^2$ of the group $\ZZ/2 \ltimes (\CC)^*$. One directly checks that its connected component of the identity acts transitively on each of the two lines of the degenerate conic. The same is of course true for the action of $\mathrm{SO}_3 (\CC )$ on a smooth conic. 
 
Now we know already that each point in $\tilde{X}^0$ outside of $\tilde{Q}^0$ has a Zariski open neighborhood isomorphic to an open set in $\PP^3$. 
Also we know that every point of $\tilde{Q}^0$ on a smooth fiber has such a Zariski open neighborhood: the conic bundle is Zariski locally trivial there since it has a smooth section, hence we can use a relative projection from some generic section of it defined locally around that point to see this. 

Thus we can concentrate on points of $\tilde{Q}^0$ which lie on a fiber $p^{-1}(P)$, $P\in\PP^2$, which consists of two lines meeting transversely. We now look at $\mathcal{G}$ over the local ring $\mathcal{O}_{P}$ of the base. We have to show that every element of the fiber $\mathcal{G}_P$ lifts to an element of $\mathcal{G} \times_{S^0} \mathrm{Spec} (\mathcal{O}_{P})$. Since the conic bundle has a smooth section, it suffices to check that we can lift over the completion $\hat{\mathcal{O}}_{P}$, hence we can assume the degeneration is locally given by $xy + t z^2 = 0$ where $t=0$ is the degeneration divisor. One can then directly compute the orthogonal group over  $\hat{\mathcal{O}}_{P}$ that stabilizes this form and check that each element of the group scheme fiber over $t=0$ lifts. 

Hence, locally around the fiber over $P$,  there exists a fiberwise automorphism of $\tilde{X}^0$ that moves the point of $\tilde{Q}^0$ on the degenerate fiber into a Zariski open around some point off $\tilde{Q}^0$ which is isomorphic to a Zariski open subset of $\PP^3$. Hence the claim follows and Theorem \ref{tBigResolutions} follows. 
\end{proof}



\begin{thebibliography}{99999999}

\bibitem[A-B-B11]{A-B-B11}
A. \ Auel, M. \ Bernardara, M. \ Bolognesi, 
\emph{Fibrations in complete intersections of quadrics, Clifford algebras, derived categories, and rationality problems}, preprint (2011), available at arXiv:1109.6938

\bibitem[CaVa97]{CaVa97}
L. \ O'Carroll \& G. \ Valla, \emph{On the smoothness of blow ups}, Communications in Algebra \textbf{25} (6), (1997) 1861-- 1872

\bibitem[CLSS99]{CLSS99}
D.F. \ Coray,  D.J. \ Lewis, N.I. \  Shepherd-Barron, P.\  Swinnerton-Dyer, 
\emph{Cubic threefolds with six double points}, Number theory in progress, Vol. \textbf{1} (Zakopane-Koscielisko, 1997), de Gruyter, Berlin, (1999), 63--74

\bibitem[Fink87]{Fink87} H. \ Finkelnberg, 
\emph{Small resolutions of the Segre cubic}, 
Nederl. Akad. Wetensch. Indag. Math. \textbf{49} (1987), no. 3, 261--277

\bibitem[FinkWern89]{FinkWern89}
H. \ Finkelnberg \& J. \ Werner,
\emph{Small resolutions of nodal cubic threefolds}, Nederl. Akad. Wetensch. Indag. Math. \textbf{51} (1989), no. 2, 185--198

\bibitem[FuNa71]{FuNa71} A. \ Fujiki \& S. \ Nakano, 
\emph{ Supplement to ``On the Inverse of Monoidal Transformation"}
,  Publ. RIMS Kyoto Univ. Vol. \textbf{7}, (1971/72), 637--644

\bibitem[Gro89]{Gro89} M.\ Gromov,
\emph{Oka's Principle for Holomorphic Sections of Elliptic Bundles}, Journal of the American Mathematical Society, Vol. \textbf{2}, No. 4 (1989), 851--897

\bibitem[H-T10]{H-T10}
B. \ Hassett, Y. \ Tschinkel, 
\emph{Flops on holomorphic symplectic fourfolds and determinantal cubic hypersurfaces}, 
J. Inst. Math. Jussieu \textbf{9} (2010), no. 1, 125--153

\bibitem[Hunt96]{Hunt96}
B. \ Hunt, \emph{The geometry of some special arithmetic quotients}, LNM \textbf{1637}, Springer- Verlag (1996)

\bibitem[Keel92]{Keel92} S. \ Keel, 
\emph{Intersection Theory of Moduli Space of Stable N-Pointed Curves of Genus Zero}, Transactions of the American Mathematical Society, Vol. \textbf{330}, No. 2 (1992), 545--574

\bibitem[MelPol09]{MelPol09} M. \ Mella \& E. \ Polastri, 
\emph{Equivalent birational embeddings}, Bull. Lond. Math. Soc. \textbf{41} (2009), no. 1, 89--93

\bibitem[MelPol12]{MelPol12} M. \ Mella \& E. \ Polastri, 
\emph{Equivalent birational embeddings II: divisors}, Math. Zeitschrift  \textbf{270} (2012), 1141--1161

\bibitem[Mel12]{Mel12} M. \ Mella, 
\emph{Equivalent birational embeddings III: cones}, preprint October 2012, 8 pages

\bibitem[Moi67]{Moi67} B.G. Moishezon,
\emph{On $n$-dimensional compact varieties with $n$ algebraically independent meromorphic functions, I, II, III}, American mathematical Society Translations, Series 2, Volume \textbf{63}, (1967), 51--177


\bibitem[Nak70]{Nak70} S. \ Nakano, 
\emph{On the Inverse of Monoidal Transformation}, Publ. RIMS Kyoto Univ. Vol. \textbf{6} (1970/71), 483--502

\bibitem[Segre87]{Segre87}
C.\ Segre, \emph{Sulle variet\`{a} cubiche dello spazio a quattro dimensioni e su certi sistemi di rette e certe superficie dello spazio ordinario}, Memorie della Reale Accademia delle Scienze di Torino, serie II, tomo XXXIX, (1887), 3--48. Reprinted in ``Opere. Volume IV", Edizioni Cremonese, Roma (1963)

\bibitem[Wern87]{Wern87}
J.\  Werner, \emph{Kleine Aufl\"osungen spezieller dreidimensionaler Variet\"aten}, Thesis, Bonner Mathematische Schriften \textbf{186},  Universit\"at Bonn, Mathematisches Institut, Bonn, (1987), viii+119 pp.
\end{thebibliography}
\end{document}